\newcommand{\CC}{\mbox{$\mathbf C$}}
\newcommand{\EE}{\mbox{$\mathbf E$}}
\newcommand{\J}{\mbox{$\mathbf J$}}
\newcommand{\QQ}{\mbox{$\mathbf Q$}}
\newtheorem{theorem}{Theorem}
\newtheorem*{assumption}{Assumption}
\newtheorem{algorithm}{Algorithm}
\newtheorem{corollary}{Corollary}
\newtheorem{lemma}{Lemma}
\newtheorem{proposition}{Proposition}
\newcommand{\dom}{{U}}
\newcommand{\Real}{\mathbb{R}}
\newcommand{\E}{\mathbb{E}}
\DeclareMathOperator\Var{Var}
\DeclareMathOperator\Cov{Cov}
\newcommand{\g}{{\mathcal{G}}}
\newcommand{\dg}{{\mathcal{G}'}}
 \newcommand{\Fd}{Fr\'{e}chet derivative}
 \newcommand{\PP}{\mathbb{P}}
\newcommand{\C}{\mathbf{C}}
\newcommand{\ud}{\,\mathrm{d}}
\newcommand{\K}{{\mathcal{K}}}
\newcommand{\h}{{\mathcal{H}}}
 \renewcommand{\QQ}{\mathbb{Q}}
\begin{document}

\title{Moderate Deviation for Random Elliptic PDEs with Small Noise}
\author{Xiaoou Li}
\address{School of Statistics, University of Minnesota, 
	Minneapolis, MN 55455}
\email{lixx1766@umn.edu}
\author{Jingchen Liu}
\address{Department of Statistics,  Columbia University, New York, NY 10027
}
\email{jcliu@stat.columbia.edu}
\author{Jianfeng Lu}

\address{Department of Mathematics,
    Department of Physics, and Department of Chemistry, Duke
    University, Durham, NC 27708 USA}
\email{jianfeng@math.duke.edu}

\author{Xiang Zhou}
\address{Department of Mathematics,
City University of Hong Kong,
Tat Chee Ave, Kowloon, 
Hong Kong SAR}
\email{xizhou@cityu.edu.hk}

\date{\today}

\maketitle

\baselineskip=18pt

\begin{abstract}
	%Differential equations is a powerful tool to characterize physics systems. Very often, imprecise measurement and internal uncertainty exist. Stochastic models are employed combining with differential equations to describe such systems. 
	Partial differential equations with random inputs have become popular models to characterize physical systems with uncertainty coming from, e.g., imprecise measurement and intrinsic randomness.
	In this paper, we perform asymptotic rare event analysis for such elliptic PDEs with random inputs.
	%we consider random elliptic differential equations and perform asymptotic rare-event analysis. The analysis provides quantitative risk assessment of the system due to uncertainty. 
	In particular, we consider the asymptotic regime that the noise level converges to zero suggesting that the system uncertainty is low, but does exists. We develop sharp approximations of the probability of a large class of rare events.
	
%  In this paper we perform asymptotic rare-event analysis  for random elliptic partial differential
%  equations.  We consider the asymptotic regime that the noise level  converges to zero suggesting that the system
%  uncertainty is very low, but does exist.  This model is employed to
%  study the failure problem of elastic materials with random media. We
%  provide asymptotic approximations of the probabilities that the
%  material fails to sustain a certain amount of external force. 
%  
\end{abstract}

\section{Introduction}

The study of rare events due to system uncertainty, for example the
failure of materials due to intrinsic randomness, is crucial and yet
challenging. While those events do not often occur, they lead to
catastrophic consequences. Therefore it is important to estimate the
probabilities of such events and to characterize those events which 
help finding interventions to prevent them from happening. In this paper, 
we consider the  following classical continuum mechanical model in the form of a linear
elliptic partial differential equation (PDE)
defined on  a domain $\dom\subset \Real^d$,
\begin{equation}\label{eqn:EllipticPDE}
  - \nabla \cdot (a(x)\nabla u(x)) = f(x),
\end{equation}
subject to   certain boundary conditions that will be 
specified in the sequel. 
The solution to the above equation $u$ is the displacement field of the elastic material,
$\nabla u$ is the strain, $a$ is the elasticity tensor,  $a(x) \nabla u(x)$ is the stress tensor,  and  $f$ is the external body force.
The elasticity tensor $a(x)$ (which is uniformly positive definite)
is determined by the property of the specific material.
Instead of assuming that $a$ is deterministic, we are interested in the situations when
the tensor $a$ contains randomness. The randomness is introduced to incorporate the uncertainties  of simple elastic materials at the macroscopic level or heterogeneity in
the microstructures of complex materials.
Under this setting, the solution $u(x)$ (as a function of $a(x)$) is also a stochastic process whose law is determined by that of $a(x)$.

Besides material mechanics,  the elliptic PDE \eqref{eqn:EllipticPDE} arises 
also in many other fields of applications,
such as  hydrogeology and porous medium. 
The tensor $a(x)$ carries different names
such as conductivity and permeability. It is recognized that
the modeling of the random field $a(x)$ is of primal importance for
the analysis.
In this paper, we consider  that the random function $a(x)$ follows a log-normal distribution,
that is, 
\begin{equation}\label{a-sigma}
  a(x) = a_0(x)e^{ -  \sigma \xi(x)} \qquad x \in \dom,
\end{equation}
where $\xi(x)$ is a Gaussian random field defined on $\dom$ and
$a_0(x)$ is a deterministic function. In elasticity, in
  general $a(\cdot)$ is a function of $4$-tensor. For simplicity of
  notation, we consider a scalar field here (i.e., an isotropic
  material). The technique and result for a general $a(\cdot)$ is
  similar.  The scalar $\sigma>0$ is a parameter indexing the noise
level.  Many studies by practioners, e.g.,~\cite{Freeze-lognormal,
  Bear1987Modeling-Ground,Charbeneau2000}, have shown that the best
fit of the empirical data is the log-normal distribution. Hence, the
log-normal assumption is well justified in applications and is used in
mathematical analysis and numerical computation of the random PDE
\eqref{eqn:EllipticPDE}. In our paper, we follow this convention of
log-normal assumption for the rare-event analysis.

In this work, we consider the small noise asymptotic regime, that $\sigma$ tends to zero. Yet, even small
noise can lead to drastic difference of the PDE solution from that of
the deterministic case when the noise level is zero. Our results
characterize such rare events, more precisely, the 
deviation of the solution of the random elliptic PDE in the presence of
small noise.
In particular, we focus on the deviation from the deterministic solution   as the uncertainty level goes to $0$.
 Let $\mathcal{H}$ be 
 a mapping from $C(\bar{\dom})$ to $\Real$. Of primary interest
 \[
 \omega(\sigma) = \PP\{ \mathcal{H}(u)> \mathcal{H}(u_0) + b_\sigma\} \quad \text{as }  \sigma\to 0.
 \]
where $u$ is the solution to  equation \eqref{eqn:EllipticPDE}
and $u_0$ is the solution when the noise level is zero, i.e., $a(x)=a_0(x)$.
The level $b_\sigma$ will be sent to zero as the noise level $\sigma$ goes to zero,
which will be specified  in the sequel. The main contribution 
of this paper is to derive sharp asymptotic approximations of $\omega(\sigma)$
as $\sigma\to 0$.

Given that   $\mathcal{H}(u)$ is  a (complicated) functional of the
input  Gaussian process $\xi(x)$,
the analysis of the tail probability $\omega(\sigma)$   links naturally to the rare-event analysis of Gaussian random field.
The study of the extremes of  Gaussian random fields focuses mostly on
the tail probabilities of the supremum of the field. The results contain
general bounds on $P(\max \xi(x)>b)$ as well as sharp asymptotic approximations as $%
b\rightarrow \infty$. A partial literature contains \cite%
{LS70,MS70,ST74,Borell1975,Bor03,LT91,TA96,Berman85}.
Several methods have been introduced to obtain bounds and asymptotic
approximations. A general upper bound for the tail of $\max \xi(x)$ is developed in \cite%
{Borell1975,CIS}, which is known as the Borel--TIS inequality. For asymptotic results,
there are several methods, such as the double sum method (\cite{Pit95}) ,
 the Euler--Poincar\'{e} characteristics
of the excursion set approximation (\cite%
{Adl81,TTA05,AdlTay07,TayAdl03}),   the tube method (\cite{Sun93}),
and  the Rice method (\cite{AW08,AW09}).
Recently, the exact tail approximation of integrals of exponential functions of Gaussian random fields is developed by \cite{Liu10,LiuXu11}. Efficient computations via importance sampling has been developed by \cite{ABL08,ABL09}.
For the analysis of the tail probabilities of lognormal random fields with small noise,
refer to the recent work in \cite{LiLiuXu2016}.
There are also existing work in the context of PDE with random coefficients.
\cite{LiuZhou13,LiuZhou14}  derive asymptotic analysis
of one-dimensional elliptic PDE. \cite{SISC2015} presents the corresponding
rare-event simulation algorithms. These works focused on
the asymptotic regime that the noise level $\sigma$ is fixed.
Furthermore, \cite{KdVUQ2014} presents asymptotic analysis 
for stochastic KdV equation.

The rest of the paper is organized as follows. 
Section \ref{sec:main} presents 
the problem setup and the main
asymptotic  results.
The technical  proofs  are given in Section~\ref{sec:proof}.

\section{Main results}\label{sec:main}

\subsection{The problem setup}\label{sec:setup}
We consider the following elliptic  PDE.
Let $\dom \subset \Real^d$ be an open domain with a smooth boundary. 
The differential equation concerning $u: \dom \to \Real$ with Dirichlet boundary condition is given by
\begin{equation}\label{eq:PDE}
  \begin{cases}
    - \nabla \cdot (a(x) \nabla u (x)) = f(x) & \text{for  } x\in\dom;  \\
    u (x)= 0 & \text{for } x\in\partial \dom.
  \end{cases}
\end{equation}
In the context of elastic  mechanics, $u$ characterizes the material
deformation due to external force $f$ and $a: \dom \to \Real$ gives the
stiffness of the material.  Throughout this paper, we assume $u(\cdot)$ to be a scalar function for simplicity. 
 We
assume that the material is clamped to a frame on the boundary
$\partial \dom$ and hence the Dirichlet boundary condition $u(\partial
\dom) = 0$ in \eqref{eq:PDE} is assumed. The external force $f$ is sufficiently smooth and 
bounded, that is, there exists a constant $c\in \Real$ such that
\begin{equation}
  \label{eq:fbound}
  |f(x)| \leq c , \qquad \forall x\in\dom.
\end{equation}
We study the behavior of the material under the influence of internal randomness, which may be 
the result of manufacturing  processing or the uncertainty of the material properties at the microscopic level. We adopt a probabilistic viewpoint of the complexity and heterogeneity inherent in the material and   view the   coefficient $a(x)$ as a random field.
The process $a(x)$ is physically restricted to be positive and  is modeled as a lognormal random field given as 
  in \eqref{a-sigma}. 
Furthermore, the Gaussian random function $\xi$ has mean zero and 
its  covariance
function  is denoted by
\begin{equation}\label{cov}
C(x,y) = \EE \{\xi(x) \xi(y)\},
\end{equation}
which is certainly independent of $\sigma$. In addition,  $C$ admits the normalization condition
 $C(x,x)\equiv 1$.

The solution $u(x)$ depends implicitly on $a(x)$  through equation \eqref{eq:PDE} 
and further $\xi(x)$ via  a logarithmic change of variable.
It is useful to  define a mapping from the coefficient $\xi$ to the solution $u$ 
 \[  \J[\xi] \triangleq u_\xi  \]
where $u_\xi$ is the solution to equation \eqref{eq:PDE} with $a(x)=a_0(x)e^{-\xi(x)}$.
This mapping  depends only on the   deterministic function $a_0$,  the external force $f$, the domain 
$\dom$, and the boundary condition. In this paper, we are interested in the asymptotic regime that the amplitude of the uncertainty level $\sigma$
tends to zero. Then
 the failure problem concerns the random solution $u_{\sigma \xi}= \J (\sigma \xi)$
 by noting the definition of $\J$ above. As $\sigma\to 0$, the process $a(x)$ tends to its limiting field $a_0(x)$. Let $u_0(x)$ be the corresponding limiting solution satisfying equation
\begin{equation}\label{u0}
  \begin{cases}
    - \nabla \cdot (a_0(x) \nabla u_0 (x)) = f(x) & \text{for  } x\in\dom;  \\
    u_0 (x)= 0 & \text{for } x\in\partial \dom.
  \end{cases}
\end{equation}
Then,  under mild conditions, we have $u(x) \to u_0(x)$ as $\sigma\to 0$.

We provide asymptotic analysis of the event that $u$ deviates from its limiting solution $u_0$.
Let $\h$ be a functional  from $C(\bar \dom)$ to $\Real$ characterizing the deviation. 
For instance $\h(u)= \int_{\dom} (u(x)-u_0(x) )~dx$. 
Let $\g$ be the composition of $\J$ and $\h$, that is, $$\g(\xi) = \h(\J[\xi]).$$
To simplify notation, we always choose $\h$ such that $\g(\mathbf 0) = \h (u_0) =0$.
We are interested the tail probability of $\g(\sigma\xi)$ as $\sigma \to 0$.
In particular, we derive asymptotic approximations for 
\begin{equation}\label{prob}
\omega(\sigma) = \PP\{\g(\sigma \xi)> b\} \quad \text{as }  \sigma\to 0,
\end{equation}
where the deviation level is chosen to be $b = \kappa \sigma ^\alpha$
for some fixed $\alpha \in (0,1)$ and $\kappa>0$. In particular, the
deviation level $b$ also goes to $0$ as the uncertainty vanishes.

\subsection{Asymptotic results}
We first introduce some notation that will be used in the sequel.
Throughout this analysis, we consider $\g$ to be a differentiable function  and let $\dg$ be its \Fd, 
that is, 
\begin{equation*}
\g(\xi  + \varepsilon \eta) = \g(\xi) + \varepsilon \int_\dom
\dg[\xi] (x) \eta(x) dx  + o(\varepsilon),
\quad \mbox {as $\varepsilon \to 0$}, ~~\forall\, \xi, \eta \in C(\bar \dom).
\end{equation*}
For $0<\beta< 1$, we say that a function $w$ is H\"older continuous with order $\beta$ if the H\"older coefficient 
\begin{equation}\label{eq:holder-seminorm}
[w]_{\beta}=\sup_{x,y\in \bar{\dom},x\neq y}\frac{|w(x)-w(y)|}{|x-y|^{\beta}}  < \infty.
\end{equation}
We use  $C^{k}(\bar \dom)$ to denote the space containing all $k$-time continuously  differentiable functions. For nonnegative integer $k$ and $0\leq \beta<1$, we use $C^{k,\beta}(\bar \dom)$ to denote the  set of functions in $C^{k}(\bar \dom)$ whose $k$-th order partial derivatives are H\"older continuous with coefficient $\beta$.
For simplicity, we write
$
C^{0,\beta}(\bar \dom)=C^{\beta}(\bar \dom)$.
%and
%$$
%C^{k,0}(\bar \dom)=C^{k}(\bar \dom).
%$$
%\xz{these two are very similar and confusing notations. anyway, my personal feeling}
We proceed to the definition of norms over $C^{k,\beta}(\bar \dom)$.
We first define the seminorms
$$
[w]_{k,0}=\max_{|\gamma|=k}\sup_{\bar \dom}|D^{\gamma}w|\quad
\mbox{ and } \quad [w]_{k,\beta}= \max_{|\gamma|=k}~[D^{\gamma}w]_{\beta},
$$
where $\gamma$ is a multi-index $\gamma=(\gamma_1,...,\gamma_d)$, $|\gamma|=\sum_{i=1}^d \gamma_i$, and $D^{\gamma}w=\frac{\partial^{|\gamma|}w}{\partial^{\gamma_1}x_1...\partial^{\gamma_d}x_d}$.
We further define the norms
$$
\|w\|_{C^{k}(\bar \dom)}=\sum_{j=0}^k [w]_{j,0} \quad\mbox{ and }\quad \|w\|_{C^{k,\beta}(\bar \dom)} = \|w\|_{C^{k}(\bar \dom)} +[w]_{k,\beta}.
$$
Equipped with $\|\cdot\|_{C^{k,\beta}(\bar \dom)}$, the space $C^{k,\beta}(\bar \dom)$ is a Banach space for all non-negative integer $k$ and $0\leq \beta<1$.
To simplify notation, we write
$$
|w|_{k}=\|w\|_{C^{k}(\bar \dom)} , \quad |w|_{k,\beta}= \|w\|_{C^{k,\beta}(\bar \dom)},\quad    |w|_{\beta}=|w|_{0,\beta}.
$$

 We now present sharp asymptotic approximations of the tail probabilities $w(\sigma)$ under  the following assumptions on the functional $\g$ and the covariance function $C(x,y)$.
\begin{assumption}\
\begin{itemize}
\item[A1.] There exist constants $k,\beta,\delta_G,\kappa_G$ such that
  $k$ is a non-negative integer, $0\leq\beta< 1$, $\delta_G>0$ and for
  all $|w|_{k,\beta}\leq \delta_G$, $\g'[w]\in C^{k,\beta}(\bar
  \dom)$. In addition $\g'$ is a (local) Lipschitz operator in the sense that for
  all $|w_1|_{k,\beta},|w_2|_{k,\beta}\leq \delta_G$, we have
	$$
	\bigl\lvert\g'[w_1]-\g'[w_2] \bigr\rvert_{k,\beta}\leq \kappa_G| w_1-w_2|_{k,\beta}.
	$$
   \item[A2.] There exists $x\in \bar \dom$ such that
    $\g'[\mathbf{0}](x)\neq 0$.
  \item[A3.] The Gaussian random field $\{\xi(x):x\in \dom\}$ has a  H\"older
    continuous sample path and belongs to the space $C^{k,\beta}(\bar
    \dom)$ almost surely, that is, $\PP(|\xi|_{k,\beta}<\infty)=1$.
    The covariance function $C(\cdot,\cdot)$ is positive
    definite and satisfies  $\sup_{y\in\bar
      \dom}|C(\cdot,y)|_{k,2\beta}<\infty$. Moreover, we assume that
    $\sup_{y\in\bar \dom} |C_{D^{\gamma}\xi}(\cdot,y)|_{2\beta}<\infty $
    for all $\gamma$ such that $|\gamma|\leq k$, where we define
	\begin{equation}\label{eq:def-cd}
	C_{D^{\gamma}\xi}(x,y) \triangleq  \E\{ D^{\gamma}\xi(x)D^{\gamma}\xi(y)\}.
	\end{equation}
\end{itemize}
\end{assumption}
Define a mapping $\C: C(\bar \dom) \to C(\bar \dom)$
$$\C w \triangleq \int C(\cdot, y) w(y) \ud y.$$
We consider the optimization problem
\begin{equation}{ \label{opt:b}}
	\min_{\xi  \in \mathcal{B}, 	\g(\sigma   \C{\xi} ) = b}   \K(\xi)
\end{equation}
where the functional $\K:C^{0}(\bar \dom)\to \mathbb{R}$ is
$$\K(w) \triangleq \int_\dom w(x)C(x,y)w(y)dx dy,$$
and the set $\mathcal{B}$ is defined as
\begin{equation}\label{eq:B}
\mathcal{B} \triangleq  \left\{
w\in C^{k,\beta}(\bar \dom):|
w
|_{k,\beta}\leq \sigma^{\alpha-1-\varepsilon}
\right\}
\end{equation}
for some $\varepsilon >0$ and $\alpha$ is given as below \eqref{prob}.
Because $\mathcal{B}$ is a compact subset of $C^{k,\beta}(\bar \dom)$ and the functionals $\K$ and $\g$ are continuous over $\mathcal{B}$, the above optimization problem has at least one solution. Later in the current section, we will show that this solution is also unique. With the above optimization, we have the following sharp asymptotic approximation for the tail probability of $\omega(\sigma)$.
\begin{theorem}\label{thm:small}
	Under Assumptions A1-A3, for $0<\alpha<1$ and $b=\kappa \sigma^{\alpha}$, we have
	$$
	\PP\{\g(\sigma \xi) > b\} = (c_1+o(1))\sigma^{1-\alpha}\exp\Big(-\frac{1}{2}K^*_{\sigma}\Big)\mbox{ as }\sigma\to 0,
	$$
	where $c_1=\kappa^{-1}\{(2\pi)^{-1}\K(\g'[\mathbf{0}]) \}^{\frac{1}{2}}$ and
	$$K^*_{\sigma} = \min_{w  \in \mathcal{B}, 	\g(\sigma   \C{w} ) = b}   \K(w).$$
\end{theorem}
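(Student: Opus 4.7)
The plan is to prove Theorem~\ref{thm:small} by an exponential change of measure (Cameron--Martin translation) centred at the minimiser of the variational problem \eqref{opt:b}, after which the event $\{\g(\sigma\xi)>b\}$ reduces, to leading order, to a one-dimensional Gaussian tail that one can evaluate by Mill's ratio.

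I would first analyse the variational problem. Since $\g(\mathbf 0)=0$ and $\g$ is Fr\'echet differentiable with $\g'[\mathbf 0]\in C^{k,\beta}(\bar\dom)$ nontrivial (A1, A2), and $\K(\g'[\mathbf 0])>0$ by the positive-definiteness of $C$ in A3, the KKT conditions for \eqref{opt:b} force $w^\star_\sigma=c_\sigma\,\g'[\sigma\C w^\star_\sigma]$ for some scalar $c_\sigma$ fixed by the nonlinear constraint $\g(\sigma\C w^\star_\sigma)=b$. A contraction / implicit-function argument in $C^{k,\beta}(\bar\dom)$ based on the Lipschitz bound on $\g'$ in A1 then produces the unique minimiser, satisfying $w^\star_\sigma=\lambda_\sigma\g'[\mathbf 0]+o(\lambda_\sigma)$ with $\lambda_\sigma=\kappa\sigma^{\alpha-1}/\K(\g'[\mathbf 0])$, and
\[
K^\star_\sigma=\K(w^\star_\sigma)=\frac{\kappa^2\sigma^{2\alpha-2}}{\K(\g'[\mathbf 0])}\,(1+o(1)).
\]
For $\varepsilon>0$ small, $w^\star_\sigma$ lies well inside $\mathcal B$, so that the box constraint is inactive.

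Next I would perform the change of measure: since $h_\sigma=\C w^\star_\sigma$ lies in the Cameron--Martin space of $\xi$ by A3, the translation identity yields
\[
\PP\{\g(\sigma\xi)>b\}=e^{-K^\star_\sigma/2}\,\E\Big[\bfone\{\g(\sigma\xi+\sigma h_\sigma)>b\}\,\exp\Big(-\int_\dom w^\star_\sigma(y)\xi(y)\,dy\Big)\Big].
\]
Taylor expanding $\g$ about $\sigma h_\sigma$ and using $\g(\sigma h_\sigma)=b$, the event rewrites as $\sigma V^\star+R_\sigma(\xi)>0$, where $V^\star=\langle\g'[\sigma h_\sigma],\xi\rangle\sim\mathcal N(0,\K(\g'[\sigma h_\sigma]))$ and $|R_\sigma(\xi)|\le C\sigma^2|\xi|_{k,\beta}^2$ by A1. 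The KKT alignment $w^\star_\sigma=c_\sigma\g'[\sigma h_\sigma]$ makes the exponential weight exactly $\exp(-c_\sigma V^\star)$, and the identity $c_\sigma^2\K(\g'[\sigma h_\sigma])=K^\star_\sigma$ together with the Mill's ratio / Laplace computation
\[
\E[\bfone\{V^\star>0\}e^{-c_\sigma V^\star}]=\frac{1+o(1)}{c_\sigma\sqrt{2\pi\K(\g'[\sigma h_\sigma])}}=\frac{1+o(1)}{\sqrt{2\pi K^\star_\sigma}}=c_1\sigma^{1-\alpha}(1+o(1))
\]
combined with the prefactor $e^{-K^\star_\sigma/2}$ gives the stated asymptotic.

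The main obstacle is justifying this Taylor expansion uniformly, because the diverging tilt $c_\sigma\to\infty$ concentrates the integrand on a window of width $O(\sigma^{1-\alpha})$ in $V^\star$. I would split the expectation over $\{|\xi|_{k,\beta}\le t_\sigma\}$ and its complement, choosing $t_\sigma$ to be a suitable (logarithmic) factor of $1/\sigma$, so that on the good set the perturbation $R_\sigma/\sigma=O(\sigma t_\sigma^2)$ of the threshold only multiplies the integral by $\exp(c_\sigma\sigma t_\sigma^2)=\exp(O(\sigma^\alpha\log(1/\sigma)))=1+o(1)$, while on the bad set Cauchy--Schwarz combined with the Borell--TIS inequality applied to $|\xi|_{k,\beta}$ (finite almost surely by A3 and the H\"older regularity of $C$ assumed there) renders the contribution $o(\sigma^{1-\alpha})$. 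The KKT alignment $w^\star_\sigma\propto\g'[\sigma h_\sigma]$ is essential here, because without it a first-order mismatch $\g'[\sigma h_\sigma]-\g'[\mathbf 0]$ of size $O(\sigma^\alpha)$ would enter $R_\sigma/\sigma$ and break the control for small $\alpha$.
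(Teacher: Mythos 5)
Your overall strategy is the one the paper uses: tilt $\PP$ by the Cameron--Martin shift at the minimiser $w^\star_\sigma=\xi^*$, exploit the KKT alignment $\xi^*=\Lambda[\xi^*]\g'[\sigma\C\xi^*]$ so that the likelihood-ratio exponent is a scalar multiple $c_\sigma V^\star$ of the linearised statistic, and then evaluate $\E[\bfone\{V^\star>0\}e^{-c_\sigma V^\star}]$ by a Mill's-ratio/Laplace computation. Your leading-order computation (including $c_\sigma^2\K(\g'[\sigma\C\xi^*])=K^*_\sigma$ and $(2\pi K^*_\sigma)^{-1/2}=c_1\sigma^{1-\alpha}(1+o(1))$) matches the paper's $I_1$ term, and the characterisation of the minimiser via contraction matches Theorem~\ref{prop:opt} and Lemma~\ref{lemma:approx-lam}.

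Where the two proofs diverge is the remainder estimate, and here your sketch has a real gap. You propose splitting over $\{|\xi|_{k,\beta}\le t_\sigma\}$ with $t_\sigma$ polylogarithmic, and on the bad set using ``Cauchy--Schwarz combined with Borell--TIS.'' Taken literally, this gives $\sqrt{\PP(|\xi|_{k,\beta}>t_\sigma)}\cdot\sqrt{\E[e^{-2c_\sigma V^\star}]}$, and the second factor equals $\exp\bigl(2c_\sigma^2\K(\g'[\sigma\C\xi^*])\bigr)=\exp\bigl(O(\sigma^{2\alpha-2})\bigr)$, which diverges far faster than the $e^{-\varepsilon_0 t_\sigma^2/2}$ gain from a logarithmic $t_\sigma$. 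To make the bad set small you must first replace the weight $e^{-c_\sigma V^\star}$ by the bound $e^{c_\sigma\,O(\sigma)|\xi|_{k,\beta}^2}$ valid on the event, but that bound comes from the Taylor remainder in Lemma~\ref{lemma:second-expansion}, and the Lipschitz hypothesis in A1 for $\g'$ only applies on the $\delta_G$-ball $|w|_{k,\beta}\le\delta_G$, i.e.\ for $|\xi|_{k,\beta}\lesssim\delta_G/\sigma$. So you need a further pre-localisation to a band of width $\sigma^{\alpha-1-\varepsilon}$ before the Taylor bound is legitimate; this is precisely the role of $\mathcal{B}$ and Lemma~\ref{lemma:localization} in the paper, which dispenses with the far tail \emph{under $\PP$}, before tilting, so the exponential weight never enters. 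Once this pre-localisation is in place, the paper in fact avoids your ``good-set'' shift-the-threshold heuristic altogether and instead writes the symmetric-difference event as $\{\kappa_G\,\|\xi\|_{k,\beta}^2\ge\sigma^{-1}\Lambda[\xi^*]|Z_1|\}$, conditions on $Z_1$, and applies the conditional tail bound of Lemma~\ref{lemtail}; this is sharper and works uniformly over the full window $\mathcal{B}$, which matters because on $\mathcal{B}$ the crude bound $c_\sigma\sigma\|\xi\|^2_{k,\beta}=O(\sigma^{3\alpha-2-2\varepsilon})$ need not be $o(1)$ when $\alpha$ is small. In short: same route, correct leading term, but the remainder control as written is circular without the paper's pre-localisation, and your good-set/bad-set split would need to be nested inside it.
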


	The constants $k$ and $\beta$ in Assumptions A1-A3 are problem-dependent. For example,  \cite{li2015tail} consider the functional
	$$\g(\xi)=\int_\dom e^{\sigma \xi(t)+\mu(t)}dt - \int_{\dom}e^{\mu(t)}dt,$$
	where $\mu(\cdot) \in C^0(\bar \dom)$ is a deterministic function. This particular $\g$ satisfies Assumptions A1 and A2 with $k=0$ and $\beta=0$.  In the context of elliptic PDE, the following theorem presents sufficient conditions for Assumptions A1-A3 with $k=1$ and $0<\beta<1$.

\begin{theorem}\label{thm:PDE-A1}
Let the functional  $\g(\xi)=\h(u_{\xi})$, where $u_{\xi}$ is the solution to \eqref{eq:PDE}. Suppose that the following assumptions hold.
	 \begin{itemize}
	 	\item [H1.] There exist constants $\beta,\delta_H,\kappa_H$ such that
	 	$\delta_H>0$, $0<\beta<1$ and  $\h'(u)\in C^{\beta}(\bar \dom)$ for all $|u-u_0|_{2,\beta}\leq \delta_G$. In addition, $\h'$ is Lipschitz in the sense that
	 	$$
	 	|\h'[u_1]-\h'[u_2] |_{\beta}\leq \kappa_H| u_1-u_2|_{2,\beta}
	 	$$
		for all $|u_1-u_0|_{2,\beta},|u_2-u_0|_{2,\beta}\leq \delta_H$.
	 	Here, $u_0\in C^{2,\beta}(\bar \dom)$ is the solution to \eqref{u0} when $\xi$ is set to be $\mathbf{0}$.
	 	\item[H2.] There exists $x\in \bar \dom$ such that $\nabla g_0(x)\cdot\nabla u_0(x)\neq 0$, where $g_0\in C^{2,\beta}(\bar \dom)$ is the solution to the PDE \begin{equation}\label{eq:g_0-PDE}
	 	\begin{cases}
	 	-  \nabla \cdot( a_0(x)\nabla g_0(x))
	 	= \h'[u_0](x) & \text{for  } x\in\dom;  \\
	 	g_0 (x)= 0 & \text{for } x\in\partial \dom,
	 	\end{cases}
	 	\end{equation}
	 	
	 	\item[H3.] $\dom$ is a bounded domain with a $C^{2,\beta}$ boundary $\partial \dom$,  $a_0\in C^{1,\beta}(\bar \dom)$, $\min_{x\in \bar \dom}a_0(x)>0$ and $f\in C^{\beta}(\bar \dom)$.
	 \end{itemize}
	 \begin{itemize}
     \item [H4.] The Gaussian random field $\{\xi(x),x\in \dom\}$ is
       H\"older continuous and belongs to the space $C^{k,\beta}(\bar
       \dom)$ almost surely.  Its covariance function $C(\cdot,\cdot)$ is
       positive definite and satisfies $\sup_{y\in\bar
         \dom}|C(\cdot,y)|_{1,2\beta}<\infty$. Moreover, we assume that
       $\sup_{y\in\bar \dom} |C_{D^{\gamma}\xi}(\cdot,y)|_{2\beta}<\infty
       $ for all $\gamma$ such that $|\gamma|\leq 1$, where
       $C_{D^{\gamma}\xi}$ is defined in \eqref{eq:def-cd}.
	 \end{itemize}
Then Assumptions A1-A3 are satisfied with $k=1$ and the H\"older coefficient being $\beta$.	
\end{theorem}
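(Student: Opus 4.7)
The plan is to extract an explicit formula for $\g'[\xi]$ and then reduce each of A1--A3 either to a Banach-algebra argument or to standard Schauder estimates for linear elliptic equations; A3 is simply H4 with $k=1$.

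First I would derive the sensitivity equation: differentiating \eqref{eq:PDE} (whose coefficient is $a_0 e^{-\xi}$) in the direction $\eta$, the derivative $v_\eta$ of $u_\xi$ in $\xi$ along $\eta$ solves the divergence-form problem
\begin{equation*}
-\nabla\cdot\bigl(a_0 e^{-\xi}\nabla v_\eta\bigr)=-\nabla\cdot\bigl(a_0 e^{-\xi}\eta\,\nabla u_\xi\bigr),\qquad v_\eta\big|_{\partial\dom}=0.
\end{equation*}
The chain rule gives $\g'[\xi](\eta)=\int_\dom \h'[u_\xi](x)\,v_\eta(x)\,\ud x$. Introducing the adjoint field $g_\xi$ that satisfies $-\nabla\cdot(a_0 e^{-\xi}\nabla g_\xi)=\h'[u_\xi]$ with homogeneous Dirichlet data, two integrations by parts collapse this pairing to the pointwise formula
\begin{equation*}
\g'[\xi](x)=a_0(x)\,e^{-\xi(x)}\,\nabla u_\xi(x)\cdot\nabla g_\xi(x),
\end{equation*}
which is the engine that drives the rest of the proof.

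Assumption A2 is now immediate: at $\xi=\mathbf 0$ the adjoint problem reduces to \eqref{eq:g_0-PDE}, so $\g'[\mathbf 0](x)=a_0(x)\,\nabla u_0(x)\cdot\nabla g_0(x)$, and since $a_0>0$ by H3, H2 produces a point where this is nonzero. For A1 I would first establish that $\g'[w]\in C^{1,\beta}(\bar\dom)$ on a neighborhood of $\mathbf 0$. When $|w|_{1,\beta}\leq\delta_G$ with $\delta_G$ small, H3 gives $a_0 e^{-w}\in C^{1,\beta}(\bar\dom)$ with uniform ellipticity, and Schauder theory on the $C^{2,\beta}$ domain with $f\in C^\beta$ delivers $u_w\in C^{2,\beta}(\bar\dom)$ with $|u_w-u_0|_{2,\beta}$ small enough to sit inside the regime of H1. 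A second Schauder step with right-hand side $\h'[u_w]\in C^\beta$ yields $g_w\in C^{2,\beta}$. Because $C^{1,\beta}(\bar\dom)$ is a Banach algebra, the formula above places $\g'[w]$ in $C^{1,\beta}(\bar\dom)$.

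The main obstacle is the Lipschitz bound on $\g'$. I would write $\g'[w_1]-\g'[w_2]$ as a three-term telescoping sum involving the differences $a_0(e^{-w_1}-e^{-w_2})$, $\nabla u_{w_1}-\nabla u_{w_2}$, and $\nabla g_{w_1}-\nabla g_{w_2}$. The first is controlled by the algebra property together with the Lipschitz continuity of $s\mapsto e^{-s}$. For the other two, I would apply Schauder estimates to the subtracted PDEs
\begin{equation*}
-\nabla\cdot\bigl(a_0 e^{-w_1}\nabla(u_{w_1}-u_{w_2})\bigr)=\nabla\cdot\bigl(a_0(e^{-w_1}-e^{-w_2})\nabla u_{w_2}\bigr),
\end{equation*}
and the analogous equation for $g_{w_1}-g_{w_2}$, whose right-hand side picks up the additional term $\h'[u_{w_1}]-\h'[u_{w_2}]$ bounded via H1. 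Each estimate costs a factor of $|w_1-w_2|_{1,\beta}$, and assembling the three contributions yields $|\g'[w_1]-\g'[w_2]|_{1,\beta}\leq \kappa_G|w_1-w_2|_{1,\beta}$. The technical subtlety is keeping the Schauder constants uniform across $\{|w|_{1,\beta}\leq\delta_G\}$, which is secured by the uniform ellipticity of $a_0 e^{-w}$ once $\delta_G$ is chosen small relative to $\min_{\bar\dom}a_0>0$.
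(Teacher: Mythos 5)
Your proposal is correct and follows essentially the same route as the paper: you derive the pointwise formula $\g'[w]=a_0e^{-w}\nabla u_w\cdot\nabla g_w$ via the sensitivity and adjoint equations (which the paper states without derivation), use the identical three-term telescoping decomposition, and control each difference through Schauder estimates on the subtracted PDEs (packaged in the paper as Lemmas~\ref{lemma:regularity} and \ref{lemma:stability}), with A2 and A3 read off immediately. No gaps.
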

Under Assumption H3, the PDE \eqref{eq:PDE} has a unique solution $u_0\in C^{2,\beta}(\bar \dom)$ when $\xi$ is set to be $\mathbf{0}$.
Furthermore, under Assumptions H1 and H3, \eqref{eq:g_0-PDE} also has a unique solution in $C^{2,\beta}(\bar \dom)$. Therefore, $g_0$ and $u_0$ in the above theorem are well defined.
See Lemma~\ref{lemma:regularity} on page \pageref{lemma:regularity} for the existence and 
the uniqueness of the H\"older continuous solution to elliptic PDEs.
Combining Theorems ~\ref{thm:small} and \ref{thm:PDE-A1}, we arrive at the next corollary.
\begin{corollary}
Under the assumptions of Theorem~\ref{thm:PDE-A1}, for $0<\alpha<1$ and $b=\kappa \sigma^{\alpha}$, we have
	$$
	\PP\{\g(\sigma \xi) > b\} = (c_2+o(1))\sigma^{1-\alpha}\exp\Big(-\frac{1}{2}K^*_{\sigma}\Big)\mbox{ as }\sigma\to 0,
	$$
	where $c_2=\kappa^{-1}\{(2\pi)^{-1}\K(a\nabla g_0\cdot\nabla u_0) \}^{\frac{1}{2}}$ and $K^*_{\sigma}$ is the minimum obtained in \eqref{opt:b}.
\end{corollary}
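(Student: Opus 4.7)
The plan is to combine Theorems~\ref{thm:small} and \ref{thm:PDE-A1}, with the only genuine work being the explicit identification of the Fr\'echet derivative $\g'[\mathbf{0}]$ that appears in the prefactor of Theorem~\ref{thm:small}. Under hypotheses H1--H4, Theorem~\ref{thm:PDE-A1} certifies that Assumptions A1--A3 hold with $k=1$ and H\"older coefficient $\beta$, so Theorem~\ref{thm:small} applies and immediately yields
$$\PP\{\g(\sigma\xi)>b\}=(c_1+o(1))\sigma^{1-\alpha}\exp\Big(-\tfrac12 K^*_\sigma\Big),\qquad c_1=\kappa^{-1}\bigl\{(2\pi)^{-1}\K(\g'[\mathbf{0}])\bigr\}^{1/2}.$$
The corollary then reduces to proving the pointwise identity $\g'[\mathbf{0}](x)=a_0(x)\nabla g_0(x)\cdot\nabla u_0(x)$ (which is evidently what the abbreviated notation $a\nabla g_0\cdot\nabla u_0$ in the statement denotes), so that $c_1=c_2$.

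To compute $\g'[\mathbf{0}]$, I would differentiate $\g=\h\circ\J$ by the chain rule. Perturbing $\xi=\mathbf{0}$ by $\varepsilon\eta$ and writing $\J(\varepsilon\eta)=u_0+\varepsilon v+o(\varepsilon)$, the equation $-\nabla\cdot(a_0 e^{-\varepsilon\eta}\nabla \J(\varepsilon\eta))=f$ expands, using $e^{-\varepsilon\eta}=1-\varepsilon\eta+O(\varepsilon^2)$, to the linearized problem
\begin{equation*}
-\nabla\cdot(a_0\nabla v)=-\nabla\cdot(a_0\,\eta\,\nabla u_0)\quad\text{in }\dom,\qquad v=0\text{ on }\partial\dom,
\end{equation*}
and consequently $\g'[\mathbf{0}](\eta)=\int_{\dom}\h'[u_0](x)\,v(x)\,\ud x$. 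The remaining task is to rewrite this linear functional of $\eta$ as $\int_\dom\phi\,\eta\,\ud x$ so as to read off $\phi=\g'[\mathbf{0}]$.

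This is an elementary adjoint calculation based on the PDE~\eqref{eq:g_0-PDE} defining $g_0$. Multiplying $-\nabla\cdot(a_0\nabla g_0)=\h'[u_0]$ by $v$ and integrating by parts (the boundary term vanishes because $v|_{\partial\dom}=0$) gives $\int\h'[u_0]v\,\ud x=\int a_0\nabla g_0\cdot\nabla v\,\ud x$. Integrating by parts once more against the equation for $v$ (now the boundary term vanishes because $g_0|_{\partial\dom}=0$) produces
$$\int_\dom a_0\nabla g_0\cdot\nabla v\,\ud x=\int_\dom g_0\bigl(-\nabla\cdot(a_0\eta\nabla u_0)\bigr)\ud x=\int_\dom a_0(x)\nabla g_0(x)\cdot\nabla u_0(x)\,\eta(x)\,\ud x,$$
whence $\g'[\mathbf{0}](x)=a_0(x)\nabla g_0(x)\cdot\nabla u_0(x)$ and $c_1=c_2$, as required.

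The only subtle point is the rigorous justification of the formal expansion $\J(\varepsilon\eta)=u_0+\varepsilon v+o(\varepsilon)$ in a norm strong enough for $\h'[u_0](v)$ to make sense and for the remainder to be $o(\varepsilon)$ uniformly. Concretely, one needs that $\xi\mapsto\J(\xi)$ is Fr\'echet differentiable from $C^{1,\beta}(\bar\dom)$ into $C^{2,\beta}(\bar\dom)$ near $\mathbf{0}$, which follows from standard Schauder a priori estimates for the elliptic equation under H3 (in particular $\partial\dom\in C^{2,\beta}$ and $a_0\in C^{1,\beta}$); these same estimates underlie the verification of A1 carried out in Theorem~\ref{thm:PDE-A1}, so the analytical machinery is already available. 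Once that regularity is in hand, both integrations by parts above are legitimate in $C^{2,\beta}(\bar\dom)$ and the proof is complete.
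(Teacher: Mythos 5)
Your proposal is correct and takes essentially the same route as the paper: Theorem~\ref{thm:PDE-A1} verifies A1--A3, so Theorem~\ref{thm:small} applies, and the prefactor $c_1$ becomes $c_2$ once one identifies $\g'[\mathbf{0}](x)=a_0(x)\nabla g_0(x)\cdot\nabla u_0(x)$. The paper simply asserts the formula $\g'[w](x)=a_w(x)\nabla g_w(x)\cdot\nabla u_w(x)$ inside the proof of Theorem~\ref{thm:PDE-A1} without derivation, whereas you supply the chain-rule linearization and the two integrations by parts against the adjoint problem~\eqref{eq:g_0-PDE} that actually justify it; this is a worthwhile elaboration and is correct (both boundary terms vanish as you note, and the needed $C^{2,\beta}$ regularity is furnished by Lemma~\ref{lemma:regularity} under H3).
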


\subsection{Numerical approximation}

Now we proceed to characterizing the solution to the optimization~\eqref{opt:b}.
\begin{theorem}\label{prop:opt}
Under Assumptions A1-A3, \begin{itemize}
	\item[(i)] the optimization problem \eqref{opt:b} has a unique solution for $\sigma$ sufficiently small, denoted by $\xi^*$;
	\item[(ii)] we have the following approximation as $\sigma\to 0$
	$$
	\xi^*=(1+o_{k,\beta}(1))\kappa \sigma^{\alpha-1}\frac{\g'[\mathbf{0}]}{\K(\g'[\mathbf{0}])},
	$$
	where we write $h_{\sigma}(\cdot)=o_{k,\beta}(1)$ if $|h_{\sigma}|_{k,\beta}=o(1)$ as $\sigma\to 0$.
\end{itemize}
\end{theorem}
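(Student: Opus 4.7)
The plan is to rescale, solve an explicit linearized problem, and then use a perturbation/fixed-point argument for the sharp approximation. Substitute $\eta = \sigma^{1-\alpha}\xi$; the admissible set $\mathcal{B}$ becomes the ball $\{|\eta|_{k,\beta}\le \sigma^{-\varepsilon}\}$, the objective rescales as $\K(\xi)=\sigma^{2(\alpha-1)}\K(\eta)$, and the constraint becomes $G_\sigma(\eta):=\sigma^{-\alpha}\g(\sigma^{\alpha}\C\eta)=\kappa$. By Taylor's theorem applied to $\g$ at $\mathbf 0$, together with Assumption A1 and the boundedness of $\C$ on $C^{k,\beta}(\bar\dom)$ from A3, one obtains $G_\sigma(\eta)=L(\eta)+R_\sigma(\eta)$, where $L(\eta)=\int_{\dom}\g'[\mathbf 0](x)(\C\eta)(x)\,\ud x$ is the linearization and $|R_\sigma(\eta)|\le c\,\sigma^{\alpha}|\eta|_{k,\beta}^{2}$ uniformly on bounded subsets of $C^{k,\beta}(\bar\dom)$; an analogous estimate holds for the derivative $G_\sigma'$.

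Next, solve the limit problem $\min\K(\eta)$ subject to $L(\eta)=\kappa$. Strict convexity of $\K$ (positive definiteness of $\C$, A3) and nontriviality of $L$ (combining A2 with positive definiteness, which gives $\K(\g'[\mathbf 0])>0$) make this standard; Lagrange multipliers yield the unique minimizer $\eta_0^{*}=\kappa\,\g'[\mathbf 0]/\K(\g'[\mathbf 0])\in C^{k,\beta}(\bar\dom)$. To bound any actual minimizer $\eta_\sigma^*$ of the rescaled problem, build a near-feasible competitor: the intermediate value theorem applied to $s\mapsto G_\sigma(s\eta_0^*)$ produces $t_\sigma=1+O(\sigma^\alpha)$ with $G_\sigma(t_\sigma\eta_0^*)=\kappa$, yielding $\K(\eta_\sigma^*)\le\K(t_\sigma\eta_0^*)=(1+O(\sigma^\alpha))\K(\eta_0^*)$. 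Hence $\eta_\sigma^*$ is bounded in the energy norm induced by $\K$; a weak-compactness argument combined with the uniform remainder estimate extracts a weak limit satisfying $L(\cdot)=\kappa$, which equals $\eta_0^*$ by uniqueness of the limit problem, and strict convexity upgrades convergence to strong.

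The principal technical hurdle is lifting this convergence to the $|\cdot|_{k,\beta}$ topology and obtaining uniqueness for every sufficiently small $\sigma$. Since the covariance operator $\C$ is injective (positive definite covariance), the KKT identity $2\C\eta=\lambda\,\C\g'[\sigma^{\alpha}\C\eta]$ reduces to the explicit fixed-point system
\[
\eta=(\lambda/2)\,\g'[\sigma^{\alpha}\C\eta],\qquad G_\sigma(\eta)=\kappa,
\]
which is a small $C^{k,\beta}$-Lipschitz perturbation (for $\sigma$ small, by Assumption A1) of the limit system $\eta=(\lambda/2)\,\g'[\mathbf 0]$, $L(\eta)=\kappa$ solved by $(\eta_0^*,\,\lambda_0^*=2\kappa/\K(\g'[\mathbf 0]))$. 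Applying the implicit function theorem in $C^{k,\beta}(\bar\dom)\times\mathbb R$, where A1 supplies the required $C^1$ regularity of the right-hand side and invertibility of the linearization at $\sigma=0$ follows from $\K(\g'[\mathbf 0])\neq 0$, yields a unique branch $(\eta^*(\sigma),\lambda^*(\sigma))$ with $|\eta^*(\sigma)-\eta_0^*|_{k,\beta}=O(\sigma^{\alpha})$; the preceding convergence identifies this branch with the minimizer for all small $\sigma$, giving both uniqueness and the asymptotic form. Undoing the rescaling gives $\xi^*=\sigma^{\alpha-1}\eta^*(\sigma)=(1+o_{k,\beta}(1))\kappa\sigma^{\alpha-1}\g'[\mathbf 0]/\K(\g'[\mathbf 0])$, as claimed.
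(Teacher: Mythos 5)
Your overall strategy — rescale by $\eta=\sigma^{1-\alpha}\xi$, linearize the constraint, and perturb around the explicit linearized minimizer via the KKT fixed-point system — is essentially the same as the paper's, which builds a contraction $\Xi[w]=\Lambda[w]\,\g'[\sigma\C w]$ on $\mathcal{B}$ (Propositions~\ref{prop:contract-t} and \ref{prop:contract-w}) and reads off both uniqueness and the leading-order form from the fixed point. But there is a genuine gap in the final step.

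You invoke the implicit function theorem in $C^{k,\beta}(\bar\dom)\times\mathbb{R}$ and assert that ``A1 supplies the required $C^1$ regularity of the right-hand side.'' It does not. Assumption~A1 only says that $\g'$ is a \emph{Lipschitz} map from a $C^{k,\beta}$-ball into $C^{k,\beta}$; it says nothing about Fr\'echet differentiability of $\g'$ (i.e., existence of $\g''$). The $\eta$-derivative of your map $(\eta,\lambda)\mapsto(\lambda/2)\,\g'[\sigma^\alpha\C\eta]$ involves exactly $\g''$, so the classical IFT is not applicable. Since the IFT step is what is supposed to deliver both local uniqueness and the $o_{k,\beta}(1)$ rate in the H\"older norm, this is where the proof as written would fail. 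The fix is to replace the IFT with the Banach fixed-point theorem: the map $\eta\mapsto(\lambda/2)\,\g'[\sigma^\alpha\C\eta]$ is a contraction on $\mathcal{B}$ with constant $O(\sigma^\alpha)$ precisely because $\g'$ is Lipschitz and $\C$ is bounded on $C^{k,\beta}$ (A1 together with A3), and one additionally resolves $\lambda$ as a function of $\eta$ by a scalar contraction. This is exactly what the paper does via $T_w$ and $\Xi$, and it buys you global uniqueness on all of $\mathcal{B}$ rather than only in an IFT neighborhood — which also makes your weak-compactness/$\Gamma$-type identification step unnecessary. (That step, as stated, is also on shaky ground: you extract weak limits in the $\K$-energy topology, but the constraint $G_\sigma(\eta)=\kappa$ and the remainder bounds live in $|\cdot|_{k,\beta}$, and you would need an explicit argument that the feasible set is closed under the weak energy topology and that the limit remains in the domain of $\g$. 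With a global contraction it can simply be dropped.)

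Once the contraction is in place, the leading-order identification is the same as yours: the fixed point satisfies $\eta^*=(\lambda^*/2)\g'[\sigma^\alpha\C\eta^*]$ with $\lambda^*\to 2\kappa/\K(\g'[\mathbf 0])$, and the Lipschitz bound $|\g'[\sigma^\alpha\C\eta^*]-\g'[\mathbf 0]|_{k,\beta}=O(\sigma^\alpha)$ gives $|\eta^*-\eta_0^*|_{k,\beta}=o(1)$, i.e.\ $\xi^*=(1+o_{k,\beta}(1))\kappa\sigma^{\alpha-1}\g'[\mathbf 0]/\K(\g'[\mathbf 0])$.
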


The solution of the optimization in \eqref{opt:b}
is generally not in a closed form. 
Theorem \ref{prop:opt}
presents its first order approximation.
It is not accurate enough for a sharp asymptotic approximation.
We present further a  numerical approximation  for  $\xi^*$
in the following section.

 In this section, we present a numerical method for computing the solution $\xi^*$ to \eqref{opt:b}.
To solve theoptimization, we introduce the Lagrangian multiplier $\lambda\in \Real$ and  define the Lagrangian function $L$
$$L(\xi) = \iint \xi(x) C(x,y) \xi(y) dx dy - 2\frac{\lambda}{\sigma} (\g(\sigma \C{\xi}) - b).$$
The first order condition $\frac{\partial L}{\partial \xi}  \equiv 0$
implies  the KKT condition for $\lambda$  and $\xi$
\[\C \xi =\lambda  \C  \dg[\sigma \C{\xi}].\]
Since the covariance function $C(x,y)$ is positive definite and thus the linear map  $\C$ is a bijection. The above condition becomes
\begin{equation}
  \xi =  \lambda  \dg[\sigma \C{\xi}].
\end{equation}
The solution ($\xi^*$, $\lambda^*$) to the constrained optimization problem
is determined   by
\begin{subequations}{\label{KKT:w}}
  \begin{empheq}[left={ \empheqlbrace\,}]{align}
	&\xi^* = \lambda^*    \dg[\sigma \C{\xi^*}] ,
	\label{kkt1:w}
	\\
	&\g(\sigma \C \xi^*)=b.
	\label{kkt2:w}
  \end{empheq}
\end{subequations}
Our strategy  is to first find $\lambda$
given $\xi$ to satisfy the constraint \eqref{kkt2:w}; and then we look
for $\xi$ and the corresponding $\lambda = \Lambda(\xi)$ determined by
the previous step to satisfy the fix point equation \eqref{kkt1:w}.
Motivated by this, we define a functional
$$
\Lambda:\mathcal{B}\to [-\sigma^{\alpha-1-\varepsilon},\sigma^{\alpha-1-\varepsilon}]
$$
such that for each $w\in\mathcal{B}$, $\lambda=\Lambda(w)$ solves the following equation
\begin{equation}\label{eq:lam}
\g(\sigma\C \lambda \g'[\sigma\C w])=b. ~~
\end{equation}
To see that $\Lambda(\cdot)$ is well defined, for each $w\in \mathcal{B}$ we define the function $T_w:[-\sigma^{\alpha-1-\varepsilon},\sigma^{\alpha-1-\varepsilon}]\to \mathbb{R}$,
$$
T_w(\lambda)= \lambda - \K(\g'[\mathbf{0}])^{-1}\sigma^{-1} \Big(\g(\sigma\C \lambda \g'[\sigma\C w])-b \Big).
$$
Clearly,  solutions to \eqref{eq:lam} are fixed points of the function $T_w(\cdot)$. The well-posedness of the function $\Lambda(\cdot)$ is then established by the next proposition.
\begin{proposition}\label{prop:contract-t}
	For $\sigma$ sufficiently small, $w\in \mathcal{B}$, and $|\lambda_1|,|\lambda_2|\leq \sigma^{\alpha-1-\varepsilon}$, we have that  $|T_w(\lambda_1)|,|T_w(\lambda_2)|\leq \sigma^{\alpha-1-\varepsilon} $ and there exists a constant $\kappa_T$ independent of $\sigma$ and $w$, such that
$$
|T_w(\lambda_1)-T_w(\lambda_2)|\leq  \kappa_T \sigma^{\alpha-\varepsilon}|\lambda_1-\lambda_2|.
$$
\end{proposition}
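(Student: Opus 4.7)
The plan is to introduce $\phi_w := \C\g'[\sigma\C w]$ so that $T_w(\lambda) = \lambda - \K(\g'[\mathbf{0}])^{-1}\sigma^{-1}(\g(\sigma\lambda\phi_w)-b)$, and then to apply the integral form of the first-order Taylor expansion provided by A1. A preliminary step is to verify that every argument of $\g$ and $\g'$ encountered along the way lies inside the Lipschitz ball $|\cdot|_{k,\beta}\le\delta_G$ of A1. Assumption A3 makes $\C$ a bounded operator on $C^{k,\beta}(\bar\dom)$; hence for $w\in\mathcal{B}$ we have $|\sigma\C w|_{k,\beta}=O(\sigma^{\alpha-\varepsilon})\to 0$, and the A1 Lipschitz bound yields $|\g'[\sigma\C w]-\g'[\mathbf{0}]|_{k,\beta}=O(\sigma^{\alpha-\varepsilon})$, so in particular there is a uniform bound $|\phi_w|_{k,\beta}\le C_0$. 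For $|\lambda|\le\sigma^{\alpha-1-\varepsilon}$ one then has $|\sigma\lambda\phi_w|_{k,\beta}\le C_0\sigma^{\alpha-\varepsilon}\le\delta_G$ for small $\sigma$, uniformly along the segment joining $\sigma\lambda_1\phi_w$ and $\sigma\lambda_2\phi_w$.

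For the Lipschitz estimate, the next step is to write
\[
\g(\sigma\lambda_1\phi_w)-\g(\sigma\lambda_2\phi_w) = \sigma(\lambda_1-\lambda_2)\int_0^1\!\int_\dom \g'[\sigma s_t\phi_w](x)\,\phi_w(x)\,dx\,dt,
\]
with $s_t := t\lambda_1+(1-t)\lambda_2$. Combined with the identity $\K(\g'[\mathbf{0}]) = \int_\dom \g'[\mathbf{0}](x)(\C\g'[\mathbf{0}])(x)\,dx$, this rewrites $T_w(\lambda_1)-T_w(\lambda_2) = (\lambda_1-\lambda_2)\,\K(\g'[\mathbf{0}])^{-1}I_\sigma$, where $I_\sigma$ is the $t$-integral of $(\g'[\mathbf{0}]-\g'[\sigma s_t\phi_w])\phi_w + \g'[\mathbf{0}](\C\g'[\mathbf{0}]-\phi_w)$ over $\dom$. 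The A1 Lipschitz property bounds $|\g'[\mathbf{0}]-\g'[\sigma s_t\phi_w]|_0$ by $O(\sigma^{\alpha-\varepsilon})$, and similarly $|\C\g'[\mathbf{0}]-\phi_w|_0 = |\C(\g'[\mathbf{0}]-\g'[\sigma\C w])|_0 = O(\sigma^{\alpha-\varepsilon})$ using boundedness of $\C$, while $|\phi_w|_0$ and $|\g'[\mathbf{0}]|_0$ are uniformly bounded. Hence $I_\sigma=O(\sigma^{\alpha-\varepsilon})$ uniformly in $w$ and $\lambda_1,\lambda_2$, giving the required Lipschitz constant $\kappa_T\sigma^{\alpha-\varepsilon}$ with $\kappa_T$ independent of $\sigma$ and $w$. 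Note that $\K(\g'[\mathbf{0}])>0$ thanks to positive definiteness of $C$ in A3 combined with the nontriviality in A2, so the inversion is legitimate.

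The self-mapping claim then follows easily. Direct computation gives $T_w(0) = \K(\g'[\mathbf{0}])^{-1}\kappa\sigma^{\alpha-1}$, since $\g(\mathbf{0})=0$ and $b=\kappa\sigma^\alpha$; in particular $|T_w(0)|=O(\sigma^{\alpha-1})$. By the Lipschitz estimate just proved, $|T_w(\lambda)-T_w(0)|\le \kappa_T\sigma^{\alpha-\varepsilon}\cdot\sigma^{\alpha-1-\varepsilon}=O(\sigma^{2\alpha-1-2\varepsilon})$ uniformly for $|\lambda|\le\sigma^{\alpha-1-\varepsilon}$. Provided the ambient parameter $\varepsilon$ from the definition of $\mathcal{B}$ is chosen with $\varepsilon<\alpha$, both $\sigma^{\alpha-1}$ and $\sigma^{2\alpha-1-2\varepsilon}$ are $o(\sigma^{\alpha-1-\varepsilon})$ as $\sigma\to 0$, so $|T_w(\lambda)|\le\sigma^{\alpha-1-\varepsilon}$ for all sufficiently small $\sigma$.

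The main obstacle is not a deep estimate but the bookkeeping: verifying that A3 indeed yields boundedness of $\C$ on $C^{k,\beta}(\bar\dom)$ (which uses the uniform control $\sup_y|C(\cdot,y)|_{k,2\beta}<\infty$), and tracking $C^{k,\beta}$ norms of the nested compositions involving both $\C$ and $\g'$ so that every argument stays inside the $\delta_G$-ball where A1 applies. Once these regularity checks are in place, the algebraic identity centered on $\K(\g'[\mathbf{0}])$ reduces the contraction estimate to two routine invocations of the A1 Lipschitz bound.
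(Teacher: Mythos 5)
Your proof is correct and follows essentially the same route as the paper: expand $\g$ along the segment from $\sigma\lambda_2\phi_w$ to $\sigma\lambda_1\phi_w$, pull out $\K(\g'[\mathbf{0}])$ as the leading coefficient so that it cancels the $\K(\g'[\mathbf{0}])^{-1}$ in $T_w$, and control the residual by the A1 Lipschitz bound on $\g'$ and boundedness of $\C$. The only noticeable stylistic difference is that you invoke the integral (fundamental-theorem-of-calculus) form of the first-order expansion, whereas the paper routes through its Lemma~\ref{lemma:second-expansion} and separately absorbs the resulting $O(\sigma^2(\lambda_1-\lambda_2)^2)$ remainder; your version avoids that extra step, but the substance is identical, and your explicit remark that the argument implicitly needs $\varepsilon<\alpha$ for the self-mapping claim is a correct observation the paper leaves tacit.
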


The above proposition and the contraction mapping theorem guarantee that for each $w\in\mathcal{B}$, $T_{w}(\cdot)$ has a unique fixed point in $[-\sigma^{\alpha-1-\varepsilon},\sigma^{\alpha-1-\varepsilon}]$. Therefore, there is a unique solution $\Lambda[w]\in[-\sigma^{\alpha-1-\varepsilon},\sigma^{\alpha-1-\varepsilon}]$ satisfying \eqref{eq:lam}. Furthermore, it ensures the convergence of the iterative algorithm based on the contraction mapping $T_w(\lambda)$.
We further define an operator $\Xi$.
\begin{equation}\label{eq:Xi}
\Xi[w]=\Lambda[w]\g'[\sigma\C w].
\end{equation}

\begin{proposition}\label{prop:contract-w}
	For $\sigma$ sufficiently small,
$\Xi$ is a contraction mapping over $\mathcal{B}$. More specifically, there exists a constant $\kappa_{\Xi}$ such that for all $w_1,w_2\in \mathcal{B}$, we have
$$
|\Xi[w_1]-\Xi[w_2]|_{k,\beta}\leq \kappa_{\Xi}\sigma^{\alpha}|w_1-w_2|_{k,\beta}.
$$
\end{proposition}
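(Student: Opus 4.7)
\textbf{Proof plan for Proposition~\ref{prop:contract-w}.}
The natural decomposition is
\[
\Xi[w_1]-\Xi[w_2]=\bigl(\Lambda[w_1]-\Lambda[w_2]\bigr)\g'[\sigma\C w_1]+\Lambda[w_2]\bigl(\g'[\sigma\C w_1]-\g'[\sigma\C w_2]\bigr),
\]
so the task reduces to estimating three quantities in the $|\cdot|_{k,\beta}$ norm: (i) $|\Lambda[w]|$, uniformly over $w\in\mathcal B$; (ii) $|\Lambda[w_1]-\Lambda[w_2]|$; and (iii) $|\g'[\sigma\C w_1]-\g'[\sigma\C w_2]|_{k,\beta}$. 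Throughout I would use that the convolution operator $\C$ is bounded on $C^{k,\beta}(\bar\dom)$ with some operator bound $M_C$ (coming from Assumption A3), that $|\g'[\sigma\C w]|_{k,\beta}\leq |\g'[\mathbf 0]|_{k,\beta}+\kappa_G\sigma M_C|w|_{k,\beta}=|\g'[\mathbf 0]|_{k,\beta}+O(\sigma^{\alpha-\varepsilon})$ for $w\in\mathcal B$ (Assumption A1), and consequently the Lipschitz estimate $|\g'[\sigma\C w_1]-\g'[\sigma\C w_2]|_{k,\beta}\leq \kappa_G M_C\sigma\,|w_1-w_2|_{k,\beta}$.

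The first key step is to sharpen the range bound in Proposition~\ref{prop:contract-t} from $\sigma^{\alpha-1-\varepsilon}$ to the ``correct'' scale $O(\sigma^{\alpha-1})$. Since $\Lambda[w]$ is the unique fixed point of $T_w$ on $[-\sigma^{\alpha-1-\varepsilon},\sigma^{\alpha-1-\varepsilon}]$ and $T_w(0)=\K(\g'[\mathbf 0])^{-1}\sigma^{-1}b=\kappa\K(\g'[\mathbf 0])^{-1}\sigma^{\alpha-1}$, the contraction estimate of Proposition~\ref{prop:contract-t} gives
\[
|\Lambda[w]|\leq|T_w(\Lambda[w])-T_w(0)|+|T_w(0)|\leq \kappa_T\sigma^{\alpha-\varepsilon}|\Lambda[w]|+|T_w(0)|,
\]
so $|\Lambda[w]|\leq 2|T_w(0)|\leq C_1\sigma^{\alpha-1}$ uniformly in $w\in\mathcal B$, for $\sigma$ small. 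Plugging this into the second piece of the decomposition and combining with the Lipschitz bound on $\g'$ yields the target estimate
\[
\bigl|\Lambda[w_2](\g'[\sigma\C w_1]-\g'[\sigma\C w_2])\bigr|_{k,\beta}\leq C_1\sigma^{\alpha-1}\cdot\kappa_GM_C\sigma\,|w_1-w_2|_{k,\beta}=O(\sigma^\alpha)|w_1-w_2|_{k,\beta}.
\]

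For the first piece I would exploit that each $\Lambda[w_i]$ is a fixed point of $T_{w_i}$, writing
\[
\Lambda[w_1]-\Lambda[w_2]=\bigl(T_{w_1}(\Lambda[w_1])-T_{w_1}(\Lambda[w_2])\bigr)+\bigl(T_{w_1}(\Lambda[w_2])-T_{w_2}(\Lambda[w_2])\bigr).
\]
The first bracket is controlled by $\kappa_T\sigma^{\alpha-\varepsilon}|\Lambda[w_1]-\Lambda[w_2]|$ via Proposition~\ref{prop:contract-t}; the second bracket equals $\K(\g'[\mathbf 0])^{-1}\sigma^{-1}\bigl[\g(\sigma\C\Lambda[w_2]\g'[\sigma\C w_2])-\g(\sigma\C\Lambda[w_2]\g'[\sigma\C w_1])\bigr]$, which after a mean-value expansion of $\g$ and the Lipschitz bound on $\g'$ is $O(\sigma^{-1})\cdot\sigma\cdot|\Lambda[w_2]|\cdot\sigma|w_1-w_2|_{k,\beta}=O(\sigma^\alpha)|w_1-w_2|_{k,\beta}$ once the improved bound on $|\Lambda[w_2]|$ is inserted. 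Rearranging and absorbing the $\kappa_T\sigma^{\alpha-\varepsilon}$ term on the left gives $|\Lambda[w_1]-\Lambda[w_2]|\leq C_2\sigma^\alpha|w_1-w_2|_{k,\beta}$. Multiplying by the uniform bound $|\g'[\sigma\C w_1]|_{k,\beta}\leq 2|\g'[\mathbf 0]|_{k,\beta}$ yields the desired estimate for the first piece, and summing the two contributions produces the contraction constant $\kappa_\Xi$.

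The main obstacle is the step that upgrades Proposition~\ref{prop:contract-t}'s crude $\sigma^{\alpha-1-\varepsilon}$ range bound to the sharp $O(\sigma^{\alpha-1})$ scale: without it the naive estimate yields only a rate $\sigma^{\alpha-\varepsilon}$, not $\sigma^\alpha$. Using $T_w(0)$ as an anchor and the already-established contraction of $T_w$ is what makes this upgrade clean. The remainder is careful bookkeeping in $C^{k,\beta}(\bar\dom)$, where one needs the Lipschitz structure of $\g'$ from Assumption A1 to transfer between norms without losing powers of $\sigma$.
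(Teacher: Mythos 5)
Your proof is correct and follows essentially the same route as the paper: same product-rule decomposition (you merely swap which cross-term carries $\Lambda$ vs.\ $\g'$, which is immaterial), the same fixed-point telescoping $\Lambda[w_1]-\Lambda[w_2]=(T_{w_1}(\Lambda[w_1])-T_{w_1}(\Lambda[w_2]))+(T_{w_1}(\Lambda[w_2])-T_{w_2}(\Lambda[w_2]))$, and the same mean-value/Lipschitz control of the second bracket, which the paper isolates as Lemma~\ref{lemma:t-contract-w} and you inline. The $O(\sigma^{\alpha-1})$ bound on $\Lambda[w]$ that you derive from $T_w(0)$ plus the contraction of $T_w$ is exactly the content of the paper's Lemma~\ref{lemma:approx-lam}, which itself rests on the display \eqref{eq:approx-tl} at the end of the proof of Proposition~\ref{prop:contract-t}; you correctly identified this as the crucial sharpening that upgrades $\sigma^{\alpha-\varepsilon}$ to $\sigma^\alpha$.
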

The above proposition and the contraction mapping theorem guarantee that \eqref{KKT:w} has a unique solution $(\lambda^*,\xi^*)$ in $[-\sigma^{\alpha-1-\varepsilon},\sigma^{\alpha-1-\varepsilon}]\times \mathcal{B}$. Furthermore, this solution can be computed numerically via the following iterative algorithm.
\begin{enumerate}
	\item[1.] Initialize $\hat{\xi}^*_0=\kappa \sigma^{\alpha-1}\frac{\g'[\mathbf{0}]}{\K(\g'[\mathbf{0}])}.$
	\item[2.] At $l$-th iteration, update $\hat{\xi}^*_{l}$ by
	$$
	\hat{\xi}^*_{l}=\Xi[\hat{\xi}^*_{l-1}].
	$$

\end{enumerate}
According to the contraction mapping theorem, the rate of convergence is
$$
|\hat{\xi}^*_l-\xi^*|_{k,\beta}\leq (\kappa_{\Xi}\sigma^{\alpha})^{l}|\hat{\xi}^*_0-\xi^*|_{k,\beta}=O(\sigma^{\alpha l+\alpha-1}).
$$
Therefore, if we run $l>\frac{2(1-\alpha)}{\alpha}$ iterations, then $|\hat{\xi}^*_l-\xi^*|_{k,\beta}=o(\sigma^{1-\alpha})$, and we could use $\K(\hat{\xi}^*_l)$ to approximate $K^*_{\sigma}$ in Theorem~\ref{thm:small}.

\section{Technical proofs}\label{sec:proof}

Throughout the proof we will use $\kappa_0$ as generic notation for large and not-so-important constants whose value may vary from place to place. Similarly, we use $\varepsilon_0$ as generic notation for small positive constants. Furthermore, for two sequences $a_{\sigma}$ and $b_{\sigma}$, we write $a_{\sigma}=o(b_{\sigma})$ if $b_{\sigma}/a_{\sigma}\to 0$ as $\sigma$ tend to zero and $a_{\sigma}=O(b_{\sigma})$ if $b_{\sigma}/a_{\sigma}$ is bounded when $\sigma$ varies. Moreover, for two sequences of functions $a_{\sigma}(\cdot)$ and $b_{\sigma}(\cdot)$, we write
$a_{\sigma}=o_{k,\beta}(b_{\sigma})$ if $|a_{\sigma}|_{k,\beta}=o(|b_{\sigma}|_{k,\beta})$ and  $a_{\sigma}=O_{k,\beta}(b_{\sigma})$ if $|a_{\sigma}|_{k,\beta}=O(|b_{\sigma}|_{k,\beta})$.

The proofs in this sections are organized as follows. The proof of Theorem~\ref{thm:small} is presented in Section~\ref{sec:proof-thm-small}. Section~\ref{sec:proof-pde} shows the proof of Theorem~\ref{thm:PDE-A1}. Section~\ref{sec:proof-prop} presents proofs of Proposition~\ref{prop:contract-t}, \ref{prop:contract-w}, and \ref{prop:opt}. The proofs of supporting lemmas are postponed to Appendix~\ref{sec:proof-lemmas}.

\subsection{Proof of Theorem \ref{thm:small}}\label{sec:proof-thm-small}

We start with a useful lemma that restrict our analysis on the event $\mathcal{L}=\{\xi-\C \xi^*\in \mathcal{B}\}$, whose proof will be presented in Section~\ref{sec:proof-lemmas}.
\begin{lemma}\label{lemma:localization}
	There exists positive constant $\varepsilon_0$ such that
$$
\PP(\xi-\C \xi^*\in\mathcal{B}^c)\leq e^{-\varepsilon_0 \sigma^{2\alpha-2-2\varepsilon}}.
$$
\end{lemma}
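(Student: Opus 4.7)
The plan is to reduce the event $\{\xi - \C\xi^* \notin \mathcal{B}\}$ to a pure tail estimate on $|\xi|_{k,\beta}$ and then apply the Borell--TIS inequality, exploiting that $\tfrac12\sigma^{\alpha-1-\varepsilon}\to\infty$ as $\sigma\to 0$.

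\emph{Step 1: the centering $\C\xi^*$ is negligible.} Theorem~\ref{prop:opt}(ii) gives $|\xi^*|_{k,\beta} = O(\sigma^{\alpha-1})$. A3, via $\sup_{y}|C(\cdot,y)|_{k,2\beta}<\infty$, lets one differentiate under the integral and bound
$$|D^{\gamma}(\C w)(x)-D^{\gamma}(\C w)(z)| \leq \big(\textstyle\sup_{y}[D^{\gamma}C(\cdot,y)]_{2\beta}\big)\,|x-z|^{2\beta}|\dom|\,|w|_{0},$$
which, together with an analogous pointwise bound, shows that $\C$ is a bounded operator from $C^{k,\beta}(\bar\dom)$ to itself (the gain of $|x-z|^{\beta}$ comes from the $2\beta$ vs.\ $\beta$ regularity). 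Consequently $|\C\xi^*|_{k,\beta} = O(\sigma^{\alpha-1}) = o(\sigma^{\alpha-1-\varepsilon})$, and for all sufficiently small $\sigma$,
$$\{\xi - \C\xi^* \notin \mathcal{B}\} \;\subset\; \{|\xi|_{k,\beta} > \tfrac{1}{2}\sigma^{\alpha-1-\varepsilon}\}.$$

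\emph{Step 2: Borell--TIS for the H\"older norm.} By A3, $\xi$ is a centered Gaussian element of the separable Banach space $C^{k,\beta}(\bar\dom)$. Writing $|\xi|_{k,\beta}$ as a supremum (over a countable dense family) of the centered Gaussian variables $D^{\gamma}\xi(x)$ with $|\gamma|\leq k$ and of the normalized differences $(D^{\gamma}\xi(x) - D^{\gamma}\xi(y))/|x-y|^{\beta}$ with $|\gamma|=k$, the variances are uniformly bounded by $\sigma_{0}^{2}<\infty$: pointwise variances are controlled by $\sup_{y}|C_{D^{\gamma}\xi}(\cdot,y)|_{0}$, and difference-quotient variances by $\sup_{y}|C_{D^{\gamma}\xi}(\cdot,y)|_{2\beta}$. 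Fernique's theorem gives $m := \E|\xi|_{k,\beta}<\infty$, and the Borell--TIS inequality then yields, for $t\geq m$,
$$\PP(|\xi|_{k,\beta} > t) \;\leq\; 2\exp\!\Big(-\frac{(t-m)^{2}}{2\sigma_{0}^{2}}\Big).$$

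\emph{Step 3: conclude.} Substituting $t = \tfrac{1}{2}\sigma^{\alpha-1-\varepsilon}$, which diverges as $\sigma\to 0$ because $\alpha-1-\varepsilon<0$, one has $t-m \geq t/2$ for all small $\sigma$, hence
$$\PP(\xi-\C\xi^*\notin\mathcal{B}) \;\leq\; 2\exp\!\Big(-\frac{\sigma^{2\alpha-2-2\varepsilon}}{32\sigma_{0}^{2}}\Big) \;\leq\; e^{-\varepsilon_{0}\sigma^{2\alpha-2-2\varepsilon}}$$
for any $\varepsilon_{0}<1/(32\sigma_{0}^{2})$ and all sufficiently small $\sigma$.

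The main technical point is the justification of Step~2: one must verify that $|\xi|_{k,\beta}$ is genuinely the supremum of a countable family of Gaussian variables with uniformly bounded variance, and here the hypothesis $|C(\cdot,y)|_{k,2\beta}<\infty$ (rather than only the $\beta$-H\"older bound one might naively expect) is exactly what makes the $|x-y|^{-\beta}$-normalized differences of the $k$-th derivatives have a finite supremum variance. Everything else --- the operator bound on $\C$, Fernique's theorem, and Borell--TIS --- is standard once A3 is in place.
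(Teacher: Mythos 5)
Your proof follows the same two-step strategy as the paper — center out $\C\xi^*$ using $|\xi^*|_{k,\beta}=O(\sigma^{\alpha-1})$ from Proposition~\ref{prop:opt}, then apply Borell--TIS with threshold $\asymp\sigma^{\alpha-1-\varepsilon}\to\infty$ — so the overall structure is sound and the variance control you extract from Assumption~A3 (the $2\beta$-H\"older regularity of $C_{D^\gamma\xi}$) is exactly what the paper uses. There is, however, one real slip in Step~2: you write that $|\xi|_{k,\beta}$ ``is genuinely the supremum of a countable family of Gaussian variables'' $D^\gamma\xi(x)$ and normalized differences. That is not the case. The norm $|\xi|_{k,\beta}=\sum_{j=0}^k[\xi]_{j,0}+[\xi]_{k,\beta}$ is a \emph{sum} of finitely many sup-type seminorms, not a single supremum of those particular variables, so bounding the variances of point-evaluations and difference quotients does not by itself bound the weak variance $\sup_{\|\ell\|_{*}\le1}\E[\ell(\xi)^2]$ appearing in Borell--TIS on the Banach space $C^{k,\beta}(\bar\dom)$.

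There are two clean ways to repair this, and you already gesture at both. One is to forget the explicit sup-representation altogether: by Assumption~A3 the centered Gaussian vector $\xi$ is a.s.\ bounded in $C^{k,\beta}(\bar\dom)$, so Fernique gives $\E|\xi|_{k,\beta}<\infty$, and the finiteness of the weak variance then follows automatically from the general theory of Gaussian measures on separable Banach spaces; at that point Borell--TIS in Banach-space form gives exactly your Step~3 bound. The other is what the paper actually does: split the threshold as $\sigma^{\alpha-1-\varepsilon}/(k+1)$ over the $k+2$ seminorm pieces, apply Borell--TIS separately to each scalar-valued family (the point-evaluations $D^\gamma\xi(x)$ and the doubly-indexed field $(D^\gamma\xi(x)-D^\gamma\xi(y))/|x-y|^\beta$), and union bound. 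The paper's route has the advantage of needing only the finite-dimensional (scalar) Borell--TIS and making the role of $\sup_y[C_{D^\gamma\xi}(\cdot,y)]_{2\beta}<\infty$ completely explicit as the variance cap for the difference-quotient field; your route is shorter once the Banach-space machinery is granted. Either way the conclusion and the rate $e^{-\varepsilon_0\sigma^{2\alpha-2-2\varepsilon}}$ come out the same. (One further small point in your favor: you correctly pass through $|\C\xi^*|_{k,\beta}$ via the boundedness of $\C$ on $C^{k,\beta}$, whereas the paper's displayed inclusion \eqref{eq:event-bc} silently writes $|\xi^*|_{k,\beta}$ where $|\C\xi^*|_{k,\beta}$ is what the triangle inequality actually produces; your Step~1 operator estimate is precisely what justifies the step the paper leaves implicit.)
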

\begin{proof}[Proof for Theorem \ref{thm:small}]
		Let $\xi^*$ be the solution to \eqref{opt:b}. We define an exponential change of
		measure
		\begin{equation}
		\frac{d\QQ}{d\PP}=\exp\Big(\int_{\dom} \xi^*(x) \xi(x)dx- \frac{1}{2}\int_\dom\int_\dom \xi^*(x)
		C(x, y) \xi^*(y)ds dt\Big).
		\end{equation}
		Under measure $\QQ$,  $\xi(x)$ is a Gaussian random field with mean function $\mathbf{C}\xi^*(x)$ and
		covariance function $C(x,y)$.
%		According to Lemma \ref{LemLocal},
%		$$P\left(\int_\dom e^{\sigma f(t)+\mu(t)}>b , \mathcal{L}^c\right) = o(1)\sigma^{1-\alpha} \exp\Big(-\frac{1}{2}K^*_\sigma\Big) .$$
%		Therefore, we only need to consider $P(\g(\sigma\xi)>b , \mathcal{L})$.
Let $$\mathcal L = \{\xi-\C \xi^*\in \mathcal{B}\}.$$
 According to Lemma~\ref{lemma:localization}, we only need to consider the event restricted to $\mathcal{L}$.
By means of the change of measure $\QQ$, we have
		\begin{eqnarray}\label{eqproof1}
		&&\PP\left(\g(\sigma\xi)>b , \mathcal{L}\right)\notag\\
		&=& \EE^\QQ\left[\frac{d\PP}{d\QQ};~\g(\sigma\xi)>b,
		~\mathcal{L}\right]\notag\\
		&=&\exp\left(\frac{1}{2}\int_{\dom\times \dom}\xi^*(x) C(x, y) \xi^*(y)ds dt\right)
		\EE^{\QQ}\left[e^{-\int_\dom \xi^*(x)\xi(x)dx};\g(\sigma\xi)>b,
		\mathcal{L}\right],
		\end{eqnarray}
		where $\EE^\QQ$ denotes the expectation with respect to the measure $\QQ$.
		It is easy to check that the random field $\mathbf{C}\xi^*(x)+\xi(x)$ under $\PP$ has the same distribution as $\xi(x)$ under $\QQ$.
		Thus, we replace the probability measure $\QQ$ and $\xi$ with $\PP$ and $\mathbf{C}\xi^*+\xi$ in \eqref{eqproof1} and obtain
		\begin{eqnarray}\label{eqproof1'}
		&&\PP\left( \g(\sigma\xi)>b , \mathcal{L}\right)\notag\\
		&=&\exp\left(\frac{1}{2}\int_{\dom\times \dom}\xi^*(x) C(x, y) \xi^*(y)dx dy\right)
		\EE\left[e^{-\int_\dom \xi^*(x)(\mathbf{C}\xi^*(x)+\xi(x))dx};\g(\sigma(\xi+\mathbf{C}\xi^*) )>b,
		\xi\in \mathcal{B}\right]\notag\\
		&=&
		\exp\left(-\frac{1}{2}\int_{\dom\times \dom} \xi^*(x) C(x, y) \xi^*(y)ds dt\right)
		\EE\left[e^{-\int_\dom \xi^*(x)\xi(x)dx}; \g(\sigma(\xi+\mathbf{C}\xi^*) )-\g(\sigma\mathbf{C}\xi^*)>0,
	\xi\in\mathcal{B}\right]\notag\\
	&=& e^{-\frac{1}{2}K_{\sigma}^*}\times\EE\left[e^{-\int_\dom \xi^*(x)\xi(x)dx}; \g(\sigma(\xi+\mathbf{C}\xi^*) )-\g(\sigma\mathbf{C}\xi^*)>0,
	\xi\in\mathcal{B}\right].\notag
		\end{eqnarray}
		We define two events $$F=\{\g(\sigma(\xi+\mathbf{C}\xi^*) )-\g(\sigma\mathbf{C}\xi^*)>0\},  \mbox{ and }
		F_1= \Big\{\int_\dom\g'[\sigma \mathbf{C}\xi^*](x)\sigma\xi(x)dx>0\Big\}.
		$$	
		Let the event $\mathcal{L}_1= \{
		\xi\in\mathcal{B}\}$.
		We will present an approximation for
	$$
		I_1=	\EE\left[e^{-\int_\dom \xi^*(x)\xi(x)dx}; F_1 \right]
	$$
	and show that
	$$
		I_2=		\EE\left[e^{-\int_\dom \xi^*(x)\xi(x)dx}; (F_1\triangle F) \cap
				\mathcal{L}_1\right]
	$$
is ignorable, where $``\triangle"$ denotes the symmetric difference between two sets.
First, we compute
		\begin{equation}\label{exp1}
		I_1=	\EE\left[e^{-\int_\dom \xi^*(x) \xi(x)dx}; \int_\dom\g'[\sigma \mathbf{C}\xi^*](x)\xi(x)dx>0\right].
		\end{equation}
		According to Proposition \ref{prop:contract-w},  $\xi^*$ is the fixed point of the contraction map $\Xi$ and thus
		$$\xi^* = \Xi[\xi^*]=\Lambda[\xi^*]\g'[\sigma\C\xi^*].$$
		Therefore, $\xi^*$ and $\g'[\sigma \mathbf{C}\xi^*]$ are different only by a factor of $\Lambda[\xi^*]$.
		Thus, $\int_\dom \xi^*(x) \xi(x)dx$  and $\int_\dom\g'[\sigma \mathbf{C}\xi^*](x)\xi(x)dx>0$ are different by a factor $\Lambda[\xi^*]$.
		The following lemma establishes an approximation for $\Lambda[\xi^*]$.
		\begin{lemma}\label{lemma:approx-lam}
			For all $w\in\mathcal{B}$, $\Lambda[w]=\kappa \K(\g'[\mathbf{0}])^{-1}\sigma^{\alpha-1}(1+o(1))$. This approximation is uniform in $w$.
		\end{lemma}
		Thanks to Lemma \ref{lemma:approx-lam}, we have
		\begin{equation*}
		\Lambda[\xi^*]=(1+o(1))\frac{\kappa\sigma^{\alpha-1}}{\K(\g'[\mathbf{0}])}.
		\end{equation*}
		Let $Z_1=\int_\dom \xi^*(x)\xi(x)dx$, then $Z_1$ is a normally distributed random variable with a zero mean. The expectation \eqref{exp1}
		can be computed as follows
		\begin{eqnarray}
			&&\EE\left[e^{-Z_1};  Z_1>0, \right]\notag \\
			&=& \int_{0}^{\infty}\frac{1}{\sqrt{2\pi \Var(Z_1)}} e^{-\frac{z_1^2}{2\Var(Z_1)}-z_1}dz_1\notag\\
			&=& \frac{1}{\sqrt{2\pi \Var(Z_1)}} E[ e^{-\frac{V^2}{2\Var(Z_1)}}],\label{eq:expectation-exp}
		\end{eqnarray}
		where $V$ is a random variable following the exponential distribution with rate $1$.
Notice that
\begin{equation}\label{eq:var-z1}
\Var(Z_1)=\int_{\dom\times \dom} \xi^*(x)C(x,y)\xi^*(y)dxdy = (1+o(1))\kappa^2\sigma^{2\alpha-2}\K^{-1}[\g'[\mathbf{0}]].
\end{equation}
The second equality is obtained with the aid of Proposition~\ref{prop:opt}(ii).
The above display, \eqref{eq:expectation-exp} and dominated convergence theorem give
$$
I_1=\kappa^{-1}\{(2\pi)^{-1}\K(\g'[\mathbf{0}])\}^{1/2}\sigma^{1-\alpha}(1+o(1)).
$$
Now, we proceed to the term $I_2$.
\begin{lemma}\label{lemma:second-expansion}
	Under Assumption A1, we have that for $|w_1|_{k,\beta},|w_2|_{k,\beta}\leq \delta_G$,
	\begin{equation*}
	|w_1-w_2|_{k,\beta}^{-2}\Big|\g(w_1)-\g(w_2)-\int_\dom \g'[w_2](x)(w_1(x)-w_2(x))dx \Big|\leq meas(\dom)\kappa_G,
	\end{equation*}
	where $meas(\dom)$ is the Lebesgue measure of $\dom$ and  $k,\beta,\delta_G,\kappa_G$ are constants appeared in Assumption A1.
\end{lemma}
According to Lemma~\ref{lemma:second-expansion}, we have that for $\sigma$ sufficiently small and $\xi\in\mathcal{B}$,
\begin{equation}\label{eq:expand-cxi}
%\sigma\int_\dom \g'[\sigma\C\xi^*](x)\xi(x)dx-\kappa_{G}\sigma^2\|\xi\|^2_{k,\beta}\\
%\leq
\Big|\g(\sigma(\xi+\C \xi^*))-\g(\sigma\C\xi^*)- \sigma\int_\dom \g'[\sigma\C\xi^*](x)\xi(x)dx\Big|\leq meas(\dom)\kappa_{G}\sigma^2|\xi|^2_{k,\beta}.
\end{equation}
Note that on the event $F_1\triangle F$, $\g(\sigma(\xi+\C \xi^*))-\g(\sigma\C\xi^*)$ and $\sigma\int_\dom \g'[\sigma\C\xi^*](x)\xi(x)dx$ have opposite signs and thus
\begin{equation}\label{eq:triangle-event}
\Big|\g(\sigma(\xi+\C \xi^*))-\g(\sigma\C\xi^*)- \sigma\int_\dom \g'[\sigma\C\xi^*](x)\xi(x)dx\Big|\geq |\sigma\int_\dom \g'[\sigma\C\xi^*](x)\xi(x)dx|.
\end{equation}
We combine \eqref{eq:expand-cxi} and \eqref{eq:triangle-event} and arrive at
$$
(F\triangle F_1)\cap \mathcal{L}_1 \subset\Big\{
meas(\dom)\kappa_G \|\xi\|^2_{k,\beta}\geq \sigma^{-1}|\int_\dom \g'[\sigma\C\xi^*](x)\xi(x)dx|
\Big\}\cap\mathcal{L}_1.
$$
We write $Z_2=\|\xi\|^2_{k,\beta}$, then the above display implies that
$$
(F\triangle F_1)\cap\mathcal{L}_1\subset\{
meas(\dom)\kappa_{G}Z_2\geq \sigma^{-1} \Lambda[\xi^*]| Z_1|
\}\cap{\mathcal{L}_1}.
$$
This gives an upper bound of the expectation
		$$\EE\left[e^{-\int_\dom \xi^*(x) \xi(x)dx}; (F\triangle F_1)\cap \mathcal{L}_1\right]\leq \EE\left[e^{-Z_1}; \kappa_{G}Z_2\geq  \sigma^{-1}\Lambda[\xi^*]|Z_1|, \mathcal{L}_1\right].$$
		On the event $\{0<|Z_1|\leq \sigma^{\varepsilon}\}$, this expectation is negligible compared to $I_1$, that is,
		\begin{equation}\label{major}
		E[e^{Z_1}; 0<|Z_1|<\sigma^{\varepsilon}]=O(\PP(0<|Z_1|<\sigma^{\varepsilon}))=O(\sigma^{1-\alpha+\varepsilon}).
		\end{equation}
		The second equality in the above display is due to \eqref{eq:var-z1}.
		Furthermore, on the set $\mathcal {L}_1$, we have $|Z_1|\leq |\xi^*|_0|\xi|_0 \kappa_0 \leq \kappa_0 \sigma^{2\alpha-2-\varepsilon}$, where $\kappa_0$ is a  sufficiently large constant. Therefore, we only need to focus on the expectation
		\begin{multline}\label{exp2}
		\EE\left[e^{Z_1}; \sigma^{\varepsilon}<|Z_1|<\kappa_0\sigma^{2\alpha-2-\varepsilon}, Z_2>\Lambda(\xi^*)|Z_1/\sigma|\right]= \int_{\sigma^{\varepsilon}}^{\kappa_0\sigma^{2\alpha-2-\varepsilon}} e^{z} \PP(Z_2>\Lambda(\xi^*)z/\sigma|Z_1=z) p_{Z_1}(z)dz\\
		 + \int_{\sigma^{\varepsilon}}^{\kappa_0\sigma^{2\alpha-2-\varepsilon}} e^{z} \PP(Z_2>\Lambda(\xi^*)z/\sigma|Z_1=-z) p_{Z_1}(z)dz,
		\end{multline}
		where $p_{Z_1}(z)$ is the density function of $Z_1$.
		\begin{lemma}\label{lemtail}
			For $z\in[\sigma^{\varepsilon},\kappa_0\sigma^{2\alpha-2-\varepsilon}]$, there exists a constant $\varepsilon_0>0$ such that
			\begin{equation}\label{conditionalprob}
			\PP(Z_2>\Lambda(\xi^*) z/\sigma|Z_1=z)+\PP(Z_2>\Lambda(\xi^*) z/\sigma|Z_1=-z)\leq e^{-\varepsilon_0\sigma^{\alpha-2} z}.
			\end{equation}
		\end{lemma}
		With the above lemma, the expectation \eqref{exp2} is bounded by
		\begin{eqnarray*}\label{minor}
		%&&\EE\left[e^{\Delta Z_1}; \sigma^{1-\alpha+\varepsilon}<Z_1<\sigma^{\alpha-1-\varepsilon}, Z_2>Z_1/\sigma\right]\\
		\eqref{exp2}&\leq &
		\int_{\sigma^{\varepsilon}}^{\kappa_0\sigma^{2\alpha-2-\varepsilon}} e^{-( \varepsilon_0\sigma^{\alpha-2}-1)z} p_{Z_1}(z)dz\\
		&=& \frac{1}{\sqrt{2\pi \Var(Z_1)}}\int_{\sigma^{\varepsilon}}^{\kappa_0\sigma^{2\alpha-2-\varepsilon}} e^{-( \varepsilon_0\sigma^{\alpha-2}-1)z-\frac{z^2}{2\Var(Z_1)}}dz\\
		&\leq&\frac{1}{\sqrt{2\pi \Var(Z_1)}} \int_{\sigma^{\varepsilon}}^{\kappa_0\sigma^{2\alpha-2-\varepsilon}} e^{-\frac{\varepsilon_0}{2}\sigma^{\alpha-2}z}dz,
%		&=& \frac{1}{\sqrt{2\pi \Var(Z_1)}} E[e^{-\frac{\varepsilon_0V}{\sigma^{\alpha}}-\frac{V^2}{2\Var(Z_1)}}dz\\
%&=&o(\sigma^{1-\alpha}).
		\end{eqnarray*}
		for $\sigma$ sufficiently small so that $\varepsilon_0\sigma^{\alpha-2}-1>\frac{\varepsilon_0}{2}\sigma^{\alpha-2}$. The above inequality is further bounded by
		$$
		\eqref{exp2}\leq \frac{1}{\sqrt{2\pi \Var(Z_1)}} \int_{\sigma^{\varepsilon}}^{\kappa_0\sigma^{2\alpha-2-\varepsilon}} e^{-\frac{\varepsilon_0}{2}\sigma^{\alpha-2}z}dz\leq \frac{1}{\sqrt{2\pi \Var(Z_1)}} \kappa_0\sigma^{2\alpha-2-\varepsilon} e^{-\frac{\varepsilon_0}{2}\sigma^{\alpha-2+\varepsilon}} = O(e^{-\frac{\varepsilon_0}{2}\sigma^{\alpha-2+\varepsilon}}).
		$$
		Therefore,
		$$
		\eqref{exp2}=o(\sigma^{1-\alpha}).
		$$
%		Combining the above results with \eqref{major}, we have $I_2=o(\sigma^{1-\alpha})$. Recall that
%		$$
%		\g(\sigma\xi)>b , \mathcal{L}= (I_1+I_2)\times \exp\left(-\frac{1}{2}\int_{\dom\times \dom} \xi^*(x) C(x, y) \xi^*(y)ds dt\right).
%		$$
		We combine our analysis for $I_1$ and $I_2$ and conclude our proof for Theorem \ref{thm:small}.
	\end{proof}

\subsection{Proof of Theorem~\ref{thm:PDE-A1}}\label{sec:proof-pde}
\begin{proof}[Proof of Theorem~\ref{thm:PDE-A1}]
	We first present two useful lemmas. The following lemma guarantees the existence and uniqueness of the H\"older continuous solution to the elliptic PDE.
	\begin{lemma}\label{lemma:regularity}
		Suppose that $\dom$ is a bounded domain with a $C^{2,\beta}$ boundary $\partial \dom$ for $0<\beta<1$.
		Assume that there exist positive constants $\delta$ and $M$ such that $\min_{x\in \bar \dom} a(x)>\delta>0$, and $|a|_{1,\beta}\leq M$, and $f\in C^{\beta}(\bar \dom)$.  Then the elliptic PDE
		\begin{equation}\label{eq:gPDE}
		\begin{cases}
		- \nabla \cdot (a(x) \nabla u (x)) = f(x) & \text{for  } x\in\dom;  \\
		u (x)= 0 & \text{for } x\in\partial \dom,
		\end{cases}
		\end{equation}
		has a unique solution in $C^{2,\beta}(\bar \dom)$. Denote this solution by $u_{a,f}$, then
		\begin{equation}\label{eq:regularity-PDE}
		|u_{a,f}|_{2,\beta}\leq \kappa(\delta,M,d,\dom) |f|_{\beta},
		\end{equation}
		where $ \kappa(\delta,M,d,\dom)$ is a positive constant, depending only on $\delta,M,d$ and the domain $\dom$.
	\end{lemma}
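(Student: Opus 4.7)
\textbf{Proof plan for Lemma~\ref{lemma:regularity}.} The plan is to reduce the problem to a textbook application of Schauder theory. Writing out the divergence, the PDE becomes
$$
-a(x)\Delta u(x) - \nabla a(x)\cdot\nabla u(x) = f(x), \qquad u|_{\partial\dom}=0.
$$
Since $a\in C^{1,\beta}(\bar\dom)$ with $a\geq \delta>0$ and $|a|_{1,\beta}\leq M$, this is a linear, uniformly elliptic second-order operator whose leading coefficient $a$ lies in $C^{1,\beta}$, whose first-order coefficient $\nabla a$ lies in $C^{\beta}$, and whose ellipticity constant is controlled by $\delta$ and $M$. The boundary is $C^{2,\beta}$ and the right-hand side is $C^\beta$, so all the hypotheses of the classical Schauder framework (e.g.\ Chapter~6 of Gilbarg--Trudinger) are in place.

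First I would invoke the Schauder existence theorem (Theorem~6.14 in Gilbarg--Trudinger) to conclude that the problem admits a unique solution $u_{a,f}\in C^{2,\beta}(\bar\dom)$. Uniqueness is immediate from the maximum principle applied to the homogeneous equation, since $a>0$ implies that the only solution of $-\nabla\cdot(a\nabla v)=0$ with zero boundary data is $v\equiv 0$.

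Next I would apply the global Schauder estimate, which under these coefficient and boundary hypotheses reads
$$
|u_{a,f}|_{2,\beta} \leq C_1\bigl(|u_{a,f}|_0 + |f|_{\beta}\bigr),
$$
with $C_1=C_1(\delta,M,d,\dom)$. To absorb $|u_{a,f}|_0$, I would use the $L^\infty$ bound that follows from the maximum principle for divergence-form equations: comparing $u_{a,f}$ with an auxiliary barrier $\pm |f|_0\,\varphi$, where $\varphi$ is the solution of $-\nabla\cdot(a\nabla\varphi)=1$ on $\dom$ with zero boundary data (bounded in $L^\infty$ by a constant depending only on $\delta,M,\dom$), yields $|u_{a,f}|_0 \leq C_2|f|_0\leq C_2|f|_{\beta}$. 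Substituting back into the Schauder estimate gives the claimed inequality with $\kappa=C_1(1+C_2)$, which depends only on $\delta,M,d$, and $\dom$.

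There is essentially no technical obstacle here, since every ingredient is standard from linear elliptic theory; the only care needed is to verify that all constants can indeed be chosen to depend only on $\delta, M, d, \dom$, which follows because $\nabla a$ enters the Schauder constants only through $|a|_{1,\beta}\leq M$ and the ellipticity through $\delta$. In the write-up I would simply cite Gilbarg--Trudinger for the existence, Schauder, and maximum-principle steps rather than reproduce their proofs.
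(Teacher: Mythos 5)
Your proposal is correct and follows essentially the same route as the paper: existence and uniqueness via Gilbarg--Trudinger Theorem~6.14 (after writing the operator in non-divergence form), the global Schauder estimate $|u_{a,f}|_{2,\beta}\leq C(|u_{a,f}|_0+|f|_\beta)$ from Theorem~6.6, and a maximum-principle $L^\infty$ bound $|u_{a,f}|_0\leq C|f|_0$ (the paper cites Theorem~3.7) to absorb the zeroth-order term.
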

	We will also need the following lemma on the stability of the solution.
	\begin{lemma}\label{lemma:stability}
				Suppose that $\dom$ is a bounded domain with a $C^{2,\beta}$ boundary $\partial \dom$ for $0<\beta<1$.
		Let $a_1$, $a_2$, $f_1$ and $f_2$ be functions over the domain $\dom$ such that
		$$
		\min_{x\in\bar \dom}a_1(x)\geq \delta,\qquad \min_{x\in\bar \dom}a_2(x)\geq \delta, \qquad |a_1|_{1,\beta}, |a_2|_{1,\beta}\leq M, \mbox{ and } f_1,f_2\in C^{\beta}(\bar \dom).
		$$
		Then,
		$$
		|u_{a_1,f_1}-u_{a_2,f_2}|_{2,\beta}\leq \tilde{\kappa}(\delta,M,d,\dom)\{|f_1-f_2|_{\beta}+|a_1-a_2|_{1,\beta}|f_1|_{\beta}\},
		$$
		where the constant $\tilde{\kappa}(\delta,M,d,\dom)$ depends only on $\delta,M,d$ and the domain $\dom$.
	\end{lemma}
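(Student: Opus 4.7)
The plan is to derive an elliptic PDE for the difference $v = u_{a_1,f_1} - u_{a_2,f_2}$, re-express the source as an element of $C^\beta(\bar\dom)$, and then apply the a priori estimate from Lemma~\ref{lemma:regularity}. Write $u_i = u_{a_i,f_i}$ for brevity. Starting from
$$-\nabla\cdot(a_1 \nabla u_1) = f_1, \qquad -\nabla\cdot(a_2 \nabla u_2) = f_2,$$
I would add and subtract $\nabla\cdot(a_2\nabla u_1)$ in the first equation to get $-\nabla\cdot(a_2\nabla u_1) = f_1 + \nabla\cdot((a_1-a_2)\nabla u_1)$, and then subtract the equation for $u_2$. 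This yields
$$-\nabla\cdot(a_2\nabla v) = \tilde f \quad \text{in } \dom, \qquad v = 0 \text{ on }\partial\dom,$$
with source
$$\tilde f := (f_1-f_2) + \nabla\cdot((a_1-a_2)\nabla u_1) = (f_1-f_2) + \nabla(a_1-a_2)\cdot\nabla u_1 + (a_1-a_2)\Delta u_1.$$

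Next I would control $|\tilde f|_\beta$ via the standard H\"older product inequality $|gh|_\beta \leq C(|g|_\beta |h|_0 + |g|_0 |h|_\beta)$. Since $u_1 \in C^{2,\beta}(\bar\dom)$ (so that $\nabla u_1$ and $\Delta u_1$ lie in $C^\beta$), and $a_1-a_2 \in C^{1,\beta}(\bar\dom)$ so that both $a_1-a_2$ and $\nabla(a_1-a_2)$ lie in $C^\beta$, this produces
$$|\tilde f|_\beta \leq |f_1-f_2|_\beta + C\,|a_1-a_2|_{1,\beta}\,|u_1|_{2,\beta}$$
for a constant $C$ depending only on $d$ and $\dom$.

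Finally, observe that $a_2$ satisfies the hypotheses of Lemma~\ref{lemma:regularity} with the same constants $\delta$ and $M$, so applying it to the PDE for $v$ gives $|v|_{2,\beta} \leq \kappa(\delta,M,d,\dom)\,|\tilde f|_\beta$. A second application of Lemma~\ref{lemma:regularity} to $u_1$ itself yields $|u_1|_{2,\beta} \leq \kappa(\delta,M,d,\dom)\,|f_1|_\beta$. Chaining the three inequalities produces the stated bound with a constant $\tilde\kappa$ depending only on $\delta,M,d,\dom$.

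The one subtlety to watch is the appearance of $|a_1-a_2|_{1,\beta}$ rather than the weaker $|a_1-a_2|_\beta$: this is forced by the divergence in $\nabla\cdot((a_1-a_2)\nabla u_1)$, which, upon expansion, requires $\nabla(a_1-a_2)$ to be H\"older continuous so that $\tilde f \in C^\beta$ and Lemma~\ref{lemma:regularity} is applicable. The asymmetric appearance of $|f_1|_\beta$ (instead of $|f_2|_\beta$) is an artifact of choosing to pivot the identity on $u_1$; by symmetry one could equally derive the estimate with $|f_2|_\beta$, but either version suffices.
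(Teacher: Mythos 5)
Your proof is correct and follows essentially the same route as the paper: derive the PDE $-\nabla\cdot(a_2\nabla v)=\tilde f$ for the difference with $\tilde f = (f_1-f_2)+\nabla\cdot((a_1-a_2)\nabla u_1)$, bound $|\tilde f|_\beta$ by a H\"older product estimate, and apply Lemma~\ref{lemma:regularity} twice (once to $v$, once to $u_1$). The only cosmetic difference is that you expand the divergence term explicitly, whereas the paper leaves it in divergence form; both yield the same bound $|\tilde f|_\beta \lesssim |f_1-f_2|_\beta + |a_1-a_2|_{1,\beta}|u_1|_{2,\beta}$.
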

	The Fr\'echet derivative $\g'[w]$ has the following expression.
	\begin{equation*}
	\g'[w](x)=a_{w}(x)\nabla g_{w}(x)\cdot\nabla u_w(x),
	\end{equation*}
	where $a_w(x)=a_0e^{-w(x)}$, $u_{w}\in C^{2,\beta}(\bar \dom)$ is the unique solution to
	\begin{equation*}
	\begin{cases}
	- \nabla \cdot (a_w(x) \nabla u_w (x)) = f(x) & \text{for  } x\in\dom;  \\
	u_w (x)= 0 & \text{for } x\in\partial \dom,
	\end{cases}
	\end{equation*}
	and
	$g_{w}(x)\in C^{2,\beta}(\bar \dom)$ is the unique solution to
	\begin{equation*}
	\begin{cases}
	- \nabla \cdot (a_w(x) \nabla g_w (x)) = \h'[u_w](x)& \text{for  } x\in\dom;  \\
	g_w (x)= 0 & \text{for } x\in\partial \dom.
	\end{cases}
	\end{equation*}
	For $w_1,w_2\in C^{1,\beta}(\bar \dom)$, we are going to establish an upper bound for $|\g'[w_1]-\g'[w_2]|_{1,\beta}$.
	Note that
	\begin{eqnarray*}
	&&\g'[w_1](x)-\g'[w_2](x)\notag\\
	&=& (a_{w_1}(x)-a_{w_2}(x))\nabla g_{w_1}(x)\cdot \nabla u_{w_1}(x)\notag\\
	&&+ a_{w_2}\nabla g_{w_2}(x)\nabla (u_{w_1}-u_{w_2}(x))+ a_{w_2}(x)\nabla (g_{w_1}(x)-g_{w_2}(x))\cdot \nabla u_{w_1}(x).
	\end{eqnarray*}
	Thus,
		\begin{eqnarray}
		&&|\g'[w_1]-\g'[w_2]\notag|_{1,\beta}\\
		&\leq& |(a_{w_1}-a_{w_2})\nabla g_{w_1}\cdot \nabla u_{w_1}|_{1,\beta}\notag\\
		&&+ |a_{w_2}\nabla g_{w_2}\nabla (u_{w_1}-u_{w_2})|_{1,\beta}+ |a_{w_2}\nabla (g_{w_1}-g_{w_2})\cdot \nabla u_{w_1}|_{1,\beta}.\label{eq:split-g'}
		\end{eqnarray}
	We will establish upper bounds for the three terms on the right-hand side in the above expression separately.
	First, note that $a_{w_k}=a_0 e^{-w_k}$, $k=1,2$. Thus, there exists a constant $\varepsilon_{0}>0$ such that for all $|w_1|_{1,\beta},|w_2|_{1,\beta}\leq \varepsilon_0$,
	\begin{equation}\label{eq:bound-diff-a}
	|a_{w_1}-a_{w_2}|_{1,\beta}\leq \kappa_0 |w_1-w_2|_{1,\beta}.
	\end{equation}
 Therefore,
	\begin{equation}\label{eq:split-g'-1}
	|(a_{w_1}-a_{w_2})\nabla g_{w_1}\cdot \nabla u_{w_1}|_{1,\beta}\leq |a_{w_1}-a_{w_2}|_{1,\beta}|\nabla g_{w_1}|_{1,\beta}|\nabla u_{w_1}|_{1,\beta}\leq \kappa_0 |w_1-w_2|_{1,\beta}|g_{w_1}|_{2,\beta}|u_{w_1}|_{2,\beta}.
	\end{equation}
	Now we present upper bounds for $|g_{w_1}|_{2,\beta}$ and $|u_{w_1}|_{2,\beta}$. Let $\varepsilon_0$ be  sufficiently small such that for all $|w|_{1,\beta}\leq \varepsilon_0$, $\min_{x\in \bar \dom} a_w(x)\geq \frac{1}{2}\min_{x\in\bar \dom}a_0(x)$ and $|a_{w}|_{1,\beta}\leq 2|a_0|_{1,\beta}$. According to Lemma~\ref{lemma:regularity}, we have that for all $|w|_{1,\beta}\leq \delta_0$
	\begin{equation}\label{eq:uw-bound}
	|u_{w}|_{2,\beta}\leq \kappa(\delta,M,d,\dom) |f|_{\beta},
	\end{equation}
	where $\delta=\frac{\min_{x\in\bar \dom}a_0(x)}{2}$ and $M=2|a_0|_{1,\beta}$.
	Furthermore, according to Assumption H1,
	we have that for $|u_w-u_0|_{2,\beta}\leq \delta_H$
	\begin{equation}\label{eq:h'-bound}
	|\h'[u_w]|_{\beta}\leq |\h'[u_0]|_{\beta}+ \kappa_{H}|u_w-u_0|_{2,\beta}\leq |\h'[u_0]|_{\beta}+ \kappa_{H}\delta_H.
	\end{equation}
 Set $f=\h'[u_w]$ in Lemma~\ref{lemma:regularity} we have
	\begin{equation}\label{eq:bound-gw}
	|g_{w}|_{2,\beta}\leq \kappa(\delta,M,d,\dom)	|\h'[u_w]|_{\beta}\leq \kappa(\delta,M,d,\dom)(|\h'[u_0]|_{\beta}+ \kappa_{H}\delta_H).
	\end{equation}
	Combine this with \eqref{eq:split-g'-1} and \eqref{eq:uw-bound}, we have that for $|w_1|_{1,\beta},|w_2|_{1,\beta}\leq \varepsilon_0$
	\begin{equation}\label{eq:bound-split-g'-1}
	|(a_{w_1}-a_{w_2})\nabla g_{w_1}\cdot \nabla u_{w_1}|_{1,\beta}\leq \kappa_0 |w_1-w_2|_{1,\beta},
	\end{equation}
	with a possibly different $\kappa_0$.
	We proceed to the second term on the right-hand side of \eqref{eq:split-g'}.
	\begin{equation}\label{eq:split-g'-2}
	|a_{w_2}\nabla g_{w_2}\cdot\nabla (u_{w_1}-u_{w_2})|_{1,\beta}\leq
	|a_{w_2}|_{1,\beta}|\nabla g_{w_2}|_{1,\beta} |\nabla(u_{w_1}-u_{w_2})|_{1,\beta}
	\leq |a_{w_2}|_{1,\beta}|g_{w_2}|_{2,\beta}|u_{w_1}-u_{w_2}|_{2,\beta}.
	\end{equation}
	For $|w_2|_{1,\beta}\leq \varepsilon_0$, we have $|a_{w_2}|_{1,\beta}\leq 2 |a_0|_{1,\beta}$. Moreover, $|g_{w_2}|_{2,\beta}$ is bounded above by a constant according to \eqref{eq:bound-gw}. Therefore,
	\begin{equation}\label{eq:bound1}
	|a_{w_2}\nabla g_{w_2}\cdot\nabla (u_{w_1}-u_{w_2})|_{1,\beta}\leq \kappa_0 |u_{w_1}-u_{w_2}|_{2,\beta},
	\end{equation}
	for a possibly different $\kappa_0$.
	Taking $a_1=a_{w_1}$, $a_2=a_{w_2}$, and $f_1=f_2=f$ in Lemma~\ref{lemma:stability}, we have
	\begin{equation}\label{eq:uw-dif-bound}
	|u_{w_1}-u_{w_2}|_{2,\beta}\leq\tilde{\kappa}(\delta,M,d,\dom) |a_1-a_2|_{1,\beta}|f_{body}|_{\beta}\leq \kappa_0 |w_1-w_2|_{1,\beta}.
	\end{equation}
	\eqref{eq:bound1} and \eqref{eq:uw-dif-bound} give
	\begin{equation}\label{eq:bound-split-g'-2}
	|a_{w_2}\nabla g_{w_2}\cdot\nabla (u_{w_1}-u_{w_2})|_{1,\beta}\leq\kappa_0^2 |w_1-w_2|_{1,\beta}.
	\end{equation}
	We proceed to the third term on the right-hand side of \eqref{eq:split-g'}.
	\begin{equation}\label{eq:split-g'-3}
	|a_{w_2}\nabla (g_{w_1}-g_{w_2})\cdot \nabla u_{w_1}|_{1,\beta}\leq |a_{w_2}|_{1,\beta}|\nabla (g_{w_1}-g_{w_2})|_{1,\beta} |\nabla u_{w_1}|_{1,\beta} \leq |a_{w_2}|_{1,\beta}|g_{w_1}-g_{w_2}|_{2,\beta}|u_{w_1}|_{2,\beta}.
	\end{equation}
	According to the definition of $a_{w_2}$ and \eqref{eq:uw-dif-bound}, we have that for $|w_1|_{1,\beta},|w_2|_{1,\beta}\leq \varepsilon_0$,
	\begin{equation}\label{eq:bound2}
	|a_{w_2}\nabla (g_{w_1}-g_{w_2})\cdot \nabla u_{w_1}|_{1,\beta} \leq\kappa_0 |g_{w_1}-g_{w_2}|_{2,\beta}.
	\end{equation}
	Motivated by the definition of $g_{w_1}$ and $g_{w_2}$, we take $f_1=\h'[w_1]$, $f_2=\h'[w_2]$, $a_1=a_{w_1}$ and $a_2=a_{w_2}$ in Lemma~\ref{lemma:stability}, then
	\begin{equation}\label{eq:diff-g}
	|g_{w_1}-g_{w_2}|_{2,\beta}\leq \tilde{\kappa}(\delta,M,d,\dom)\{|\h'[w_1]-\h'[w_2]|_{\beta}+|a_{w_1}-a_{w_2}|_{1,\beta}|\h'[w_1]|_{\beta}\}.
	\end{equation}
	According to Assumption H1, for $|w_1|_{1,\beta},|w_2|_{1,\beta}\leq \delta_H$, we have
	\begin{equation}\label{eq:H1-use}
	|\h'[w_1]-\h'[w_2]|_{\beta}\leq \kappa_H |w_1-w_2|_{1,\beta}
	\end{equation}
	\eqref{eq:H1-use}, \eqref{eq:bound-diff-a}, \eqref{eq:h'-bound} and \eqref{eq:diff-g} give
	\begin{equation*}
	|g_{w_1}-g_{w_2}|_{2,\beta}\leq\kappa_0 |w_1-w_2|_{1,\beta}.
	\end{equation*}
	The above inequality and \eqref{eq:bound2} give
	\begin{equation}\label{eq:bound-split-g'-3}
	|a_{w_2}\nabla (g_{w_1}-g_{w_2})\cdot \nabla u_{w_1}|_{1,\beta} \leq\kappa_0^2 |w_1-w_2|_{1,\beta}
	\end{equation}
	We combine \eqref{eq:split-g'}, \eqref{eq:bound-split-g'-1}, \eqref{eq:bound-split-g'-2}, and \eqref{eq:bound-split-g'-3}, and arrive at
	\begin{equation}
	|\g'[w_1]-\g'[w_2]|_{1,\beta}\leq \kappa_0 |w_1-w_2|_{1,\beta},
	\end{equation}
	for $\varepsilon_0$ sufficiently small, $|w_1|_{1,\beta},|w_2|_{1,\beta}\leq \varepsilon_0$ and a possibly different $\kappa_0$. Thus, Assumption A1 is satisfied with $k=1$. According to the definition of $\g'$, Assumption A2 is a dirrect application of Assumption H2. Assumption A3 is the same Assumption H4 for $k=1$. Now we have already checked all the Assumptions A1-A3.
\end{proof}

\subsection{Proof of propositions}\label{sec:proof-prop}
\begin{proof}[Proof of Proposition~\ref{prop:contract-t} ]
	Note that as $\sigma$ tends to zero, we have $\sigma \C w=o_{k,\beta}(1)$, $\g'[\sigma\C w]=\g'[\mathbf{0}]+o_{k,\beta}(1)$ and $\sigma\C\lambda\g'[\sigma\C w] =o_{k,\beta}(1)$ for all $|\lambda|\leq \sigma^{\alpha-1-\varepsilon}$ and $w\in\mathcal{B}$. This allow us to expand $\g(\sigma\C\lambda\g'[\sigma\C w])$ near the origin. We elaborate this expansion as follows.
	First, according to Assumption A1, we have that there exists a constant $\varepsilon_0$ such that for all $w\in\mathcal{B}$ and $\sigma\leq\varepsilon_0$,
	\begin{equation}\label{eq:gp-ex}
	\g'[\sigma\C w]=\g'[\mathbf{0}]
	+O_{k,\beta}(\sigma \C w).
	\end{equation}
	Second, with the aid of \eqref{eq:gp-ex} we have that for all $|\lambda_1|,|\lambda_2|\leq \sigma^{\alpha-1-\varepsilon}$ and $w\in\mathcal{B}$,
	\begin{equation}\label{eq:ex2}
	\sigma\C\lambda_1\g'[\sigma\C w]-\sigma\C\lambda_2\g'[\sigma\C w]=\sigma(\lambda_1-\lambda_2)\C \{\g'[\mathbf{0}]+O_{k,\beta}(\sigma \C w) \}.
	\end{equation}
	Thanks to Lemma~\ref{lemma:second-expansion} on page \pageref{lemma:second-expansion} and \eqref{eq:ex2}, we have that for all $|\lambda_1|,|\lambda_2|\leq \sigma^{\alpha-1-\varepsilon}$ and $w\in\mathcal{B}$,
	\begin{equation}\label{eq:ex4}
	\g(\sigma\C\lambda_1\g'[\sigma\C w])-\g(\sigma\C\lambda_2\g'[\sigma\C w])=\int_{\dom} \g'[\sigma\C\lambda_2\g'[\sigma\C w]](x)v(x)dx+O(|v|_{k,\beta}^2),
	\end{equation}
	where we define
	$$
	v(x)=\sigma\C\lambda_1\g'[\sigma\C w](x)-\sigma\C\lambda_2\g'[\sigma\C w](x).
	$$
	Setting $w$ as $\lambda_2\g'[\sigma\C w]$ in \eqref{eq:gp-ex}, we have
	\begin{equation}
	\label{eq:lam2}
	\g'[\sigma\C\lambda_2\g'[\sigma\C w]]= \g'[\mathbf{0}] + O_{k,\beta}(\sigma\C \lambda_2\g'[\sigma\C w] )=\g'[\mathbf{0}]+O_{k,\beta}(\sigma\lambda_2\g'[\mathbf{0}] ).
	\end{equation}
	The last equality in the above display is due to \eqref{eq:gp-ex} and the fact $O_{k,\beta}(\sigma\C w)=o_{k,\beta}(1)$.
	According to  \eqref{eq:ex2} and \eqref{eq:lam2}, we have
	\begin{eqnarray}
	&&\int_{\dom} \g'[\sigma\C\lambda_2\g'[\sigma\C w]](x)v(x)dx\notag\\
	& =& \sigma(\lambda_1-\lambda_2)\Big\{\int_\dom \C\g'[\mathbf{0}](x)\g'[\mathbf{0}](x)dx + O(\int_\dom\sigma^2 \lambda_2\g'[\mathbf{0}](x) \g'(x)dx)+O(\int_\dom \sigma \g'[\mathbf{0}](x)\C w (x)dx)\notag\\
	&&+ O(\sigma\lambda_2 \sigma \int_\dom\g'[\mathbf{0}](x)\C w(x)dx)\Big\}.\notag
	\end{eqnarray}
	Note that for $\lambda_2\in[-\sigma^{\alpha-1-\varepsilon},\sigma^{\alpha-1-\varepsilon}]$ the above expression is simplified as
	\begin{eqnarray}\label{eq:simp-ex3}
	\int_\dom\g'[\sigma\C\lambda_2\g'[\sigma\C w]](x)v(x)dx &=& \sigma(\lambda_1-\lambda_2)\Big\{ \int_\dom \C \g'[\mathbf{0}](x)\g'[\mathbf{0}](x)dx+O(\sigma^{\alpha-\varepsilon})\Big\}\notag\\
	&=&\sigma(\lambda_1-\lambda_2)\{ \K(\g'[\mathbf{0}])+O(\sigma^{\alpha-\varepsilon})\}.
	\end{eqnarray}
	Combining the above expression with \eqref{eq:ex4}, we have that for  $|\lambda_1|,|\lambda_2|\leq \sigma^{\alpha-1-\varepsilon}$ and $w\in\mathcal{B}$.
	\begin{equation*}
	\g(\sigma\C\lambda_1\g'[\sigma\C w])-\g(\sigma\C\lambda_2\g'[\sigma\C w])=\sigma(\lambda_1-\lambda_2)\{\K(\g'[\mathbf{0}])+O(\sigma^{\alpha-\varepsilon})\}+O(\sigma^2(\lambda_1-\lambda_2)^2),
	\end{equation*}
	which can be simplified as
	\begin{equation}\label{eq:g-exp}
	\g(\sigma\C\lambda_1\g'[\sigma\C w])-\g(\sigma\C\lambda_2\g'[\sigma\C w])=\sigma(\lambda_1-\lambda_2)\{
	\K(\g'[\mathbf{0}])+O(\sigma^{\alpha-\varepsilon})
	\}.
	\end{equation}
	Recall the definition of $T_{w}(\lambda)$, we plug the above expression into the difference $T_{w}(\lambda_1)-T_{w}(\lambda_2)$, and arrive at
	\begin{equation*}
	T_{w}(\lambda_1)-T_{w}(\lambda_2)= \lambda_1-\lambda_2 - \K(\g'[\mathbf{0}])^{-1}\sigma^{-1}\times\sigma(\lambda_1-\lambda_2)\{
	\K(\g'[\mathbf{0}])+O(\sigma^{\alpha-\varepsilon}) \},
	\end{equation*}
	which is simplified as
	\begin{equation*}
	T_{w}(\lambda_1)-T_{w}(\lambda_2)=-\K(\g'[\mathbf{0}])^{-1}(\lambda_1-\lambda_2)\times O(\sigma^{\alpha-\varepsilon}).
	\end{equation*}
	The above expression implies that for $|\lambda_1|,|\lambda_2|\leq \sigma^{\alpha-1-\varepsilon}$,
	\begin{equation}\label{eq:t-contract}
	T_{w}(\lambda_1)-T_{w}(\lambda_2)=(\lambda_1-\lambda_2)\times O(\sigma^{\alpha-\varepsilon}).
	\end{equation}
	This shows that $T_{w}(\lambda)$ is a contraction mapping for $\lambda\in[-\sigma^{\alpha-1-\varepsilon},\sigma^{\alpha-1-\varepsilon}]$.
	To see $T_{w}(\lambda)\in [-\sigma^{\alpha-1-\varepsilon},\sigma^{\alpha-1-\varepsilon}]$ for $\lambda\in[-\sigma^{\alpha-1-\varepsilon},\sigma^{\alpha-1-\varepsilon}]$ and $w\in\mathcal{B}$, we let $\lambda_2=0$ and $\lambda_1=\lambda$ in \eqref{eq:t-contract} and obtain that
	$$
	T_{w}(\lambda)-T_{w}(0)=\lambda O(\sigma^{\alpha-\varepsilon})=O(\sigma^{2\alpha-1-2\varepsilon}).
	$$
	Recall that $b=\kappa\sigma^{\alpha}$, and $T_{w}(0)=-\K(\g'[\mathbf{0}])^{-1}\sigma^{-1}b=-\kappa \K(\g'[\mathbf{0}])^{-1}\sigma^{\alpha-1}$.
	This implies
	\begin{equation}\label{eq:approx-tl}
	T_{w}(\lambda)=\kappa \K(\g'[\mathbf{0}])^{-1}\sigma^{\alpha-1}(1+o(1))\in [-\sigma^{\alpha-1-\varepsilon},\sigma^{\alpha-1-\varepsilon}]
	\end{equation} and concludes our proof.
\end{proof}

\begin{proof}[Proof of Proposition~\ref{prop:contract-w}]
	According to the definition of $\Xi$,
	\begin{equation*}
	\Xi[w_1]-\Xi[w_2]= \Lambda[w_1](\g'[\sigma\C w_1]-\g'[\sigma\C w_2])+(\Lambda[w_1]-\Lambda[w_2])\g'[\sigma\C w_2].
	\end{equation*}
	Therefore, we have
	\begin{equation}\label{eq:split-xi}
	|\Xi[w_1]-\Xi[w_2]|_{k,\beta}\leq |\Lambda[w_1]| \times|(\g'[\sigma\C w_1]-\g'[\sigma\C w_2])|_{k,\beta}+|\Lambda[w_1]-\Lambda[w_2]|\times|\g'[\sigma\C w_2]|_{k,\beta}.
	\end{equation}
	We establish upper bound for the first and second terms on the right-hand-side of the above inequality separately.
	To start with, according to Assumptions A1 and A3 that $\sup_{y\in\bar \dom}|C(\cdot,y)|_{k,2\beta}<\infty$, for $w_1,w_2\in\mathcal{B}$, we have
	\begin{equation}\label{eq:split-xi-first}
	|\Lambda[w_1]| \times|(\g'[\sigma\C w_1]-\g'[\sigma\C w_2])|_{k,\beta} = O(\sigma|\Lambda[w_1]||w_1-w_2|_{k,\beta})=O(\sigma^{\alpha})|w_1-w_2|_{k,\beta}.
	\end{equation}
	The second equality in the above expression is due to Lemma~\ref{lemma:approx-lam} on page~\pageref{lemma:approx-lam}.
	We proceed to the second term on the right-hand-side of \eqref{eq:split-xi}. 	
	Because $\Lambda[w]$ is the fixed point of $T_{w}(\cdot)$, we have
	$$
	T_{w_1}(\Lambda[w_1])=\Lambda[w_1]\mbox{ and }T_{w_2}(\Lambda[w_2])=\Lambda[w_2].
	$$
	Taking differencing between the above two equalities, we have
	$$
	T_{w_1}(\Lambda[w_1])-T_{w_2}(\Lambda[w_2])=\Lambda[w_1]-\Lambda[w_2].
	$$
	Adding and subtracting the term $T_{w_1}(\Lambda[w_2])$ in the above equality, we have
	$$
	\Lambda[w_1]-\Lambda[w_2]= T_{w_1}(\Lambda[w_1])-T_{w_1}(\Lambda[w_2])+T_{w_1}(\Lambda[w_2])-T_{w_2}(\Lambda[w_2]).
	$$
	Consequently,
	\begin{equation}\label{eq:bound-dif-lam}
	|\Lambda[w_1]-\Lambda[w_2]|\leq |T_{w_1}(\Lambda[w_1])-T_{w_1}(\Lambda[w_2])|+|T_{w_1}(\Lambda[w_2])-T_{w_2}(\Lambda[w_2])|.
	\end{equation}
	According to Proposition~\ref{prop:contract-t}, the first term on the right-hand-side of the above expression is bounded above by $O(\sigma^{\alpha-\varepsilon})|\Lambda[w_1]-\Lambda[w_2]|$.
	\begin{lemma}\label{lemma:t-contract-w}
		For all $|\lambda|= O(\sigma^{\alpha-1})$ and $w_1,w_2\in\mathcal{B}$, we have
		$$
		|T_{w_1}(\lambda)-T_{w_2}(\lambda)|= O(\sigma^{\alpha}) |w_1-w_2|_{k,\beta}.
		$$
	\end{lemma}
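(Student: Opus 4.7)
The plan is to unwind the definition of $T_w$ and reduce the estimate to a bound on $\g(\sigma\C \lambda \g'[\sigma\C w_1]) - \g(\sigma\C \lambda \g'[\sigma\C w_2])$. Writing $\phi_i = \sigma\C\lambda\g'[\sigma\C w_i]$ for $i = 1,2$, the definition of $T_w$ gives
$$T_{w_1}(\lambda)-T_{w_2}(\lambda) \;=\; -\,\K(\g'[\mathbf{0}])^{-1}\sigma^{-1}\bigl(\g(\phi_1) - \g(\phi_2)\bigr),$$
so the target bound $O(\sigma^{\alpha})|w_1 - w_2|_{k,\beta}$ is equivalent to showing $|\g(\phi_1)-\g(\phi_2)| = O(\sigma^{\alpha+1})|w_1 - w_2|_{k,\beta}$.

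First I would estimate $|\phi_1 - \phi_2|_{k,\beta}$. Using the Lipschitz property of $\g'$ from Assumption A1 (applied at the arguments $\sigma\C w_1$ and $\sigma\C w_2$, which lie in a $\delta_G$-neighborhood of $\mathbf{0}$ for $\sigma$ small since $w_i\in\mathcal{B}$), together with the fact that $\C$ is bounded on $C^{k,\beta}(\bar\dom)$ under Assumption A3 (because $\sup_{y}|C(\cdot,y)|_{k,2\beta}<\infty$), I get
$$|\g'[\sigma\C w_1] - \g'[\sigma\C w_2]|_{k,\beta} \;\leq\; \kappa_G\,\sigma\,|\C(w_1-w_2)|_{k,\beta} \;=\; O(\sigma)\,|w_1-w_2|_{k,\beta}.$$
Multiplying by $\sigma\lambda \C$ and using $|\lambda| = O(\sigma^{\alpha-1})$ yields
$$|\phi_1-\phi_2|_{k,\beta} \;=\; O(\sigma^{\alpha+1})\,|w_1-w_2|_{k,\beta}.$$

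Next I would pass from $\phi_1 - \phi_2$ to $\g(\phi_1) - \g(\phi_2)$. By Lemma~\ref{lemma:second-expansion} applied to $\phi_1,\phi_2$ (both $o_{k,\beta}(1)$, hence in the admissible ball for $\sigma$ small),
$$\g(\phi_1) - \g(\phi_2) \;=\; \int_{\dom}\g'[\phi_2](x)(\phi_1-\phi_2)(x)\,dx \;+\; O\bigl(|\phi_1 - \phi_2|_{k,\beta}^2\bigr).$$
Since $\phi_2 = o_{k,\beta}(1)$, the Lipschitz property of $\g'$ gives $|\g'[\phi_2]|_{k,\beta} = O(1)$, so the linear term is bounded in absolute value by $O(|\phi_1-\phi_2|_{k,\beta}) = O(\sigma^{\alpha+1})|w_1-w_2|_{k,\beta}$, while the quadratic remainder is $O(\sigma^{2\alpha+2})|w_1-w_2|_{k,\beta}^2$, which is of strictly higher order. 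Plugging this into the displayed identity for $T_{w_1}(\lambda) - T_{w_2}(\lambda)$ and using the $\sigma^{-1}$ prefactor produces the desired $O(\sigma^{\alpha})|w_1-w_2|_{k,\beta}$ bound.

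I do not expect a genuine obstacle here; the only delicate point is bookkeeping to ensure the admissibility hypotheses of the Lipschitz bound in A1 and of Lemma~\ref{lemma:second-expansion} are met uniformly over $w_i\in\mathcal{B}$ and $|\lambda| = O(\sigma^{\alpha-1})$. This is where the choice of the radius $\sigma^{\alpha-1-\varepsilon}$ in $\mathcal{B}$ (with $\alpha<1$) together with the $\sigma$ prefactor in $\sigma\C w_i$ is used, since it forces $\sigma\C w_i,\phi_i = o_{k,\beta}(1)$ as $\sigma\to 0$, placing all arguments safely inside the $\delta_G$-neighborhood required by Assumption A1.
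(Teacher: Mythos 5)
Your proof is correct and takes essentially the same route as the paper's: both start from the identity $T_{w_1}(\lambda)-T_{w_2}(\lambda)=-\K(\g'[\mathbf 0])^{-1}\sigma^{-1}\bigl(\g(\phi_1)-\g(\phi_2)\bigr)$, invoke Lemma~\ref{lemma:second-expansion} together with the Lipschitz bound from A1 and the boundedness of $\C$ from A3 to estimate $\g(\phi_1)-\g(\phi_2)$, and then absorb the quadratic remainder (which requires $\varepsilon$ small so that the extra factor $|w_1-w_2|_{k,\beta}\leq 2\sigma^{\alpha-1-\varepsilon}$ does not spoil the bound). The only cosmetic difference is order of bookkeeping: you bound $|\phi_1-\phi_2|_{k,\beta}$ first and then feed it into the expansion, while the paper writes the expansion and then simplifies term by term.
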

	According to Lemma~\ref{lemma:t-contract-w}, the second term on the right-hand-side of \eqref{eq:bound-dif-lam} is bounded above by $O(\sigma^{\alpha})|w_1-w_2|_{k,\beta}.$ Therefore, we have
	$$
	|\Lambda[w_1]-\Lambda[w_2]|\leq O(\sigma^{\alpha-\varepsilon})|\Lambda[w_1]-\Lambda[w_2]|+O(\sigma^{\alpha})|w_1-w_2|_{k,\beta}.
	$$
	Consequently, we have that for $w_1,w_2\in\mathcal{B}$,
	\begin{equation}\label{eq:lam-lip}
	|\Lambda[w_1]-\Lambda[w_2]|= O(\sigma^{\alpha})|w_1-w_2|_{k,\beta}.
	\end{equation}
	According to \eqref{eq:gp-ex},
	$$|\g'[\sigma\C w_2]|_{k,\beta}=O(1).$$
	The above approximation and \eqref{eq:lam-lip} give
	\begin{equation*}
	|\Lambda[w_1]-\Lambda[w_2]|\times |\g'[\sigma\C w_2]|_{k,\beta} = O(\sigma^{\alpha})|w_1-w_2|_{k,\beta}.
	\end{equation*}
	Combining the above display with \eqref{eq:split-xi} and \eqref{eq:split-xi-first}, we complete our proof.
\end{proof}

\begin{proof}[Proof of Proposition~\ref{prop:opt}]

	(i) is a direct application of Proposition~\ref{prop:contract-w}, contraction mapping theorem and the KKT condition~\eqref{KKT:w}. We proceed to the proof of (ii).
	Because $\xi^*$ is the fixed point of $\Xi$ in $\mathcal{B}$, we have
	$$
	\Xi[\xi^*]=\Lambda(\Xi^*)\g'[\sigma\C \xi^*]= \kappa \K(\g'[\mathbf{0}])^{-1}\sigma^{\alpha-1}(1+o(1))(\g'[\mathbf{0}]+O_{k,\beta}(\sigma \xi^*))=(1+o_{k,\beta}(1))\frac{\kappa \g'[\mathbf{0}]}{\K(\g'[\mathbf{0}])}.
	$$
	To obtain the second equality in the above display, we use approximation in Lemma~\ref{lemma:approx-lam} on page~\pageref{lemma:approx-lam} and \eqref{eq:gp-ex}.
\end{proof}
\section*{Acknowledgement}
Jingchen Liu is partially  supported by the National Science Foundation (SES-1323977,  IIS-1633360) and  Army Grant (W911NF-15-1-0159). Jianfeng Lu is partially supported by National Science Foundation (DMS-1454939). Xiang Zhou acknowledges the support from Hong Kong General Research Fund (109113, 11304314, 11304715).

%\thanks{
%	Jianfeng acknowledges the support from NSF .
%}
%%\thanks{This research is supported in part by NSF, SES-1323977, NSF, IIS-1633360, NSF, DMS-1454939, and Army Grant W911NF-15-1-0159.}}
%
%\thanks{Xiang acknowledges the support from Hong Kong General Research Fund (109113, 11304314, 11304715). }

\bibliographystyle{apalike}
\bibliography{PDE,MaterialsFailure,Asym1D18}

\newpage
\appendix
\section{Proof of supporting lemmas}\label{sec:proof-lemmas}
\begin{proof}[Proof of Lemma~\ref{lemma:localization}]

Note that the event $\{\xi-\CC \xi^*\notin\mathcal{B} \}=\{
|\xi-\CC \xi^*|_{k,\beta}>\sigma^{\alpha-1-\varepsilon}
\}$ implies the event $\{|\xi|>\sigma^{\alpha-1-\varepsilon}-|\xi^*|_{k,\beta} \}$. According to Proposition~\ref{prop:opt}, $|\xi^*|_{k,\beta}=O(\sigma^{\alpha-1})$. Thus,
\begin{equation}\label{eq:event-bc}
\{\xi-\CC \xi^*\notin\mathcal{B} \}\subset\{
|\xi|_{k,\beta} >\varepsilon_0 \sigma^{\alpha-1-\varepsilon}
\},
\end{equation}
for a positive constant $\varepsilon_0$ and $\sigma$ sufficiently small.
Recall the definition
\begin{equation*}
|\xi|_{k,\beta}=\sum_{l=1}^k \sup_{|\gamma|=l}\sup_{x\in \bar \dom} |D^{\gamma}\xi(x)|+\sup_{|\gamma|=k}[D^{\gamma}\xi]_{\beta}.
\end{equation*}	
Consequently,
\begin{eqnarray*}
\{\xi-\CC\xi^*\notin \mathcal{B}\}&\subset&\bigcup_{l=1}^k\{\sup_{|\gamma|=l}
\sup_{x\in\bar \dom}|D^{\gamma}\xi(x)|>\frac{\sigma^{\alpha-1-\varepsilon}}{k+1}
\}\bigcup \{
\sup_{|\gamma|=l}[D^{\gamma}\xi]_{\beta}>\frac{\sigma^{\alpha-1-\varepsilon}}{k+1}
\}\\
&=&\bigcup_{l=1}^k\bigcup_{|\gamma|=l}\{\sup_{x\in\bar \dom}|D^{\gamma}\xi(x)|>\frac{\sigma^{\alpha-1-\varepsilon}}{k+1}
\}\bigcup_{|\gamma|=l} \{
[D^{\gamma}\xi]_{\beta}>\frac{\sigma^{\alpha-1-\varepsilon}}{k+1}
\}.
\end{eqnarray*}
The equality in the above display is due to the fact that $\{\sup_{l=1}^m X_l\geq \eta \}= \cup_{l=1}^m\{
X_l\geq \eta
\}$ for any random variable $X_l$, $l=1,...,m$ and constant $\eta$. According to the above display, we arrive at a upper bound of probability.
\begin{equation}\label{eq:split-prob}
\PP(\xi-\CC \xi^*\notin\mathcal{B})\leq
\sum_{l=1}^k\sum_{|\gamma|=l}\PP(\sup_{x\in\bar \dom}|D^{\gamma}\xi(x)|>\frac{\sigma^{\alpha-1-\varepsilon}}{k+1}) +\sum_{|
\gamma|=l}\PP([D^{\gamma}\xi]_{\beta}>\frac{\sigma^{\alpha-1-\varepsilon}}{k+1}).
\end{equation}
We establish upper bounds for $\PP(\sup_{x\in\bar
  \dom}|D^{\gamma}\xi(x)|>\frac{\sigma^{\alpha-1-\varepsilon}}{k+1})$ and
$\PP([D^{\gamma}\xi]_{\beta}>\frac{\sigma^{\alpha-1-\varepsilon}}{k+1})$
separately. We first analyze the term $\PP(\sup_{x\in\bar
  \dom}|D^{\gamma}\xi(x)|>\frac{\sigma^{\alpha-1-\varepsilon}}{k+1})$. We
will need the following lemma, known as the Borell-TIS inequality,
which was proved independently by \cite{Borell1975} and
\cite{Cirel'son1976}.
	\begin{lemma}[Borell-TIS inequality]\label{lemma:borel}
		Let $g(x)$ be a centered and almost surely bounded Gaussian
		random field. Then, $\EE\sup_{x\in \dom} |g(x)|<\infty$. Furthermore, for any $t>\EE\sup_{x\in \dom} |g(x)|$, we have
		$$
		\PP\Big(\sup_{x\in \dom}|g(x)|-\EE\sup_{x\in \dom} |g(x)|>t\Big)\leq 2 \exp\left\{-\frac{t^2}{2\sup_{x\in \dom}\Var(g(x))}\right\}.
		$$
	\end{lemma}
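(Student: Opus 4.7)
The Borell--TIS inequality is a classical result, so the plan is essentially to appeal to \cite{Borell1975,Cirel'son1976} rather than reproduce the proof in detail. For completeness I would sketch the standard derivation via Gaussian concentration for Lipschitz functions, since that is the modern route used in most references.

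First I would exploit separability of the a.s.\ bounded centered Gaussian field $g$ on $\dom$ to reduce the supremum over $\dom$ to one over finite subsets $T_N \uparrow T_0$ of a countable dense set $T_0 \subset \dom$. On each $T_N$, a Karhunen--Lo\`eve representation writes $g(x) = \sum_{i=1}^{m_N} a_i(x)\xi_i$ with i.i.d.\ $\xi_i \sim N(0,1)$ and $\sum_i a_i(x)^2 = \Var(g(x))$. The functional $F_N(\xi) = \sup_{x\in T_N}|g(x)|$ is then $\sigma$-Lipschitz on $\Real^{m_N}$ with $\sigma := \sup_{x\in \dom}\sqrt{\Var(g(x))}$; indeed, by Cauchy--Schwarz,
\begin{equation*}
|F_N(\xi) - F_N(\xi')| \leq \sup_{x \in T_N} \Big|\sum_i a_i(x)(\xi_i - \xi'_i)\Big| \leq \sigma \|\xi-\xi'\|_2.
\end{equation*}

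Next, I would invoke the Gaussian concentration inequality for Lipschitz functions: for any $L$-Lipschitz $F:\Real^m \to \Real$ and standard Gaussian $\xi$, one has $\PP(|F(\xi)-\EE F(\xi)|>t) \leq 2\exp(-t^2/(2L^2))$. Applied with $L = \sigma$ to $F_N$ and combined with monotone convergence as $N \to \infty$, this delivers the claimed tail bound for $\sup_{\dom}|g|$. The finiteness $\EE \sup_{\dom}|g| < \infty$ follows from Fernique's theorem applied to an a.s.\ bounded centered Gaussian field, or equivalently by integrating the concentration bound over $t$.

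The main obstacle, were one to prove the underlying concentration inequality from scratch, would be establishing either the Gaussian isoperimetric inequality (Borell's original approach) or the log-Sobolev inequality for the standard Gaussian measure combined with Herbst's argument. Both routes are well documented in the literature; since the lemma is invoked here only as a technical tool, the appropriate move in this paper is to cite the original sources and apply the inequality as stated.
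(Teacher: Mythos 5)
The paper does not actually prove this lemma: it is introduced with the sentence ``We will need the following lemma, known as the Borell--TIS inequality, which was proved independently by [Borell 1975] and [Tsirelson et al.\ 1976],'' and is then applied without further argument. Your proposal takes exactly this stance (cite and apply), and the optional sketch you supply --- reduce by separability to finite index sets, write a finite-dimensional Gaussian representation, observe that $\xi \mapsto \sup_{x}\bigl|\sum_i a_i(x)\xi_i\bigr|$ is $\sigma$-Lipschitz with $\sigma^2 = \sup_x \Var(g(x))$, apply Gaussian concentration for Lipschitz functions, and pass to the limit --- is the standard modern route and is correct, including the appeal to Fernique (or to the concentration bound itself) for $\EE\sup_{\dom}|g|<\infty$.
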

According to Lemma~\ref{lemma:borel}, we have that for all $|\gamma|\leq k$, $\EE\sup_{x\in\bar \dom}|D^{\gamma}\xi(x)|<\infty$ and
\begin{equation}\label{eq:tail-bound-dgamma}
\PP(\sup_{x\in\bar \dom}|D^{\gamma}\xi(x)|>\frac{\sigma^{\alpha-1-\varepsilon}}{k+1})\leq 2\exp\left\{-\frac{\sigma^{2\alpha-2-2\varepsilon}}{8(k+1)^2\sup_{x\in\bar \dom}C_{D^{\gamma}\xi}(x,x)}\right\},
\end{equation}
for $\sigma$ sufficiently small such that $\sigma^{2\alpha-2-2\varepsilon}> 2\EE\sup_{x\in\bar \dom}|D^{\gamma}\xi(x)|$, and $C_{D^{\gamma}\xi}$ is defined \eqref{eq:def-cd}.
According to Assumption A3,  there exists a constant $\kappa_0$ such that for all $|\gamma|\leq k$,
$$
\sup_{x\in\bar \dom}C_{D^{\gamma}\xi}(x,x)\leq \sup_{y\in\bar \dom}|C_{D^{\gamma}\xi}(\cdot,y)|_{\beta}<\kappa_0.
$$
The above display together with \eqref{eq:tail-bound-dgamma} give
\begin{equation*}
\PP(\sup_{x\in\bar \dom}|D^{\gamma}\xi(x)|>\frac{\sigma^{\alpha-1-\varepsilon}}{k+1}) \leq 2\exp\{-\frac{\sigma^{2\alpha-2-2\varepsilon}}{8(k+1)^2\kappa_0}\}
\end{equation*}
Combine this with \eqref{eq:split-prob}, we have
\begin{equation}\label{eq:split-prob2}
\PP(\xi-\CC \xi^*\notin\mathcal{B})\leq \kappa_0\exp\{-\frac{\sigma^{2\alpha-2-2\varepsilon}}{8(k+1)^2\kappa_0}\}+\sum_{|
	\gamma|=l}\PP([D^{\gamma}\xi]_{\beta}>\frac{\sigma^{\alpha-1-\varepsilon}}{k+1}),
\end{equation}
for a possibly different $\kappa_0$ such that $\kappa_0\geq 2\mathrm{Card}\{\gamma: |\gamma|\leq k \}$.
We proceed to establishing upper bounds for $ \PP([D^{\gamma}\xi]_{\beta}>\frac{\sigma^{\alpha-1-\varepsilon}}{k+1})$, $|\gamma|=k$.
Recall that
$$
[D^{\gamma}\xi]_{\beta}=\sup_{x,y\in \bar \dom, x\neq y}\frac{|D^{\gamma}\xi(x)-D^{\gamma}\xi(y)|}{|x-y|^{\beta}}.
$$
Motivated by this definition, we define another centered Gaussian random field double indexed by $x,y\in\bar \dom$
	\begin{equation}
	g(x,y)=\left\{
	\begin{array}{lcl}\frac{D^{\gamma}\xi(x)-D^{\gamma}\xi(y)}{|x-y|^{\beta}}&\mbox{ for }& x\neq y\\
	0 &\mbox{ for }& x=y
	\end{array}.
	\right.
	\end{equation}
According to Assumption A3 $\xi\in C^{k,\beta}(\bar \dom)$ almost surely. Thus, $g(\cdot,\cdot)$ is bounded almost surely. According to Lemma~\ref{lemma:borel}, we have that $\EE{\sup_{x,y\in\bar \dom,x\neq y}|g(x,y)|}<\infty$, and
\begin{equation*}
\PP(\sup_{x,y\in\bar \dom}|g(x,y)|>\frac{\sigma^{\alpha-1-\varepsilon}}{k+1})\leq 2 \exp\left\{
-\frac{\sigma^{2\alpha-2-2\varepsilon}}{8(k+1)^2\sup_{x,y\in\bar \dom}\Var g(x,y)}
\right\},
\end{equation*}
for $\sigma$ sufficiently small such that $\sigma^{2\alpha-2-2\varepsilon}> 2\EE\sup_{x,y\in\bar \dom}|g(x,y)|$.
The variance of $g(x,y)$ in the above expression is bounded above as follows.
\begin{equation*}
  \begin{aligned}
\Var g(x,y) &= |x-y|^{-2\beta}\{
C_{D^{\gamma}\xi}(x,x)-C_{D^{\gamma}\xi}(x,y)+C_{D^{\gamma}\xi}(y,y)-C_{D^{\gamma}\xi}(x,y)\} \\
& \leq [C_{D^{\gamma}\xi}(x,\cdot)]_{2\beta	} +[C_{D^{\gamma}\xi}(y,\cdot)]_{2\beta	},
\end{aligned}
\end{equation*}
which is bounded above by a constant $\kappa_0$ according to Assumption A3.
Thus, we have
\begin{equation*}
\PP(\sup_{x,y\in\bar \dom}|g(x,y)|>\frac{\sigma^{\alpha-1-\varepsilon}}{k+1})\leq 2 \exp\left\{
-\frac{\sigma^{2\alpha-2-2\varepsilon}}{8(k+1)^2\kappa_0}
\right\}.
\end{equation*}
Note that $[D^{\gamma}\xi]_{\beta}=\sup_{x,y\in\bar \dom}|g(x,y)|$. Therefore, the above display is equivalent to
\begin{equation}\label{eq:bound-dg}
\PP([D^{\gamma}\xi]_{\beta}>\frac{\sigma^{\alpha-1-\varepsilon}}{k+1})\leq 2 \exp\left\{
-\frac{\sigma^{2\alpha-2-2\varepsilon}}{8(k+1)^2\kappa_0}
\right\}.
\end{equation}
We conclude our proof by combining the above inequality with \eqref{eq:split-prob2}.
\end{proof}

\begin{proof}[Proof of Lemma~\ref{lemma:approx-lam}]
	Because $\Lambda[w]$ is a fixed point of $T_{w}(\cdot)$, this lemma is a direct application of \eqref{eq:approx-tl}.
\end{proof}

\begin{proof}[Proof of Lemma~\ref{lemma:second-expansion}]
	We define a function $h:[0,1]\to\mathbb{R}$,
	$$
	h(s)=\g(w_2+s(w_1-w_2))-\g(w_2)-s\int_\dom \g'[w_2](x)\{w_1(x)-w_2(x)\}dx.
	$$
	Notice that $h(0)=0$ and $h(1)=\g(w_1)-\g(w_2)-\int_\dom \g'[w_2](x)(w_1(x)-w_2(x))dx$. Apply mean value theorem to $h$, we have
	\begin{equation}\label{eq:mean-value}
	\g(w_1)-\g(w_2)-\int_\dom \g'[w_2](x)(w_1(x)-w_2(x))dx=h(1)-h(0)=h'(\tilde{s}),
	\end{equation}
	for some $\tilde{s}\in[0,1]$.
	According to the definition of Fr\'echet derivative, it is easy to check that
	$$
	h'(s)=s\int_\dom\{\g'[w_1+s(w_1-w_2)](x)-\g'[w_2](x)\}(w_1(x)-w_2(x))dx.
	$$
	Furthermore, we have
	\begin{eqnarray*}
		&&\Big|s\int_\dom\{\g'[w_1+s(w_1-w_2)](x)-\g'[w_2](x)\}(w_1(x)-w_2(x))dx\Big|\\
		&\leq& meas(\dom) |w_1-w_2|_{0}\times|\g'[w_1+s(w_1-w_2)](x)-\g'[w_2](x)|_0\\
		&\leq& meas(\dom) |w_1-w_2|_{0}\times \kappa_G |w_1-w_2|_{k,\beta}\\
		&\leq& meas(\dom) |w_1-w_2|_{k,\beta}^2.
	\end{eqnarray*}
	Here, $meas(\dom)$ is the Lebesgue measure of the set $\dom$, the second inequality is due to Assumption A1, and the third inequality is due to the fact that $w$, $|w|_0\leq |w|_{k,\beta}$. Combine the above inequality and \eqref{eq:mean-value} we obtain the desired result.
\end{proof}

\begin{proof}[Proof of Lemma~\ref{lemtail}]
	We prove the lemma by induction.
	We first prove this lemma for the case where $k=0$ and $\beta>0$.
	We consider the conditional random field $\{\xi(x),x\in \bar \dom \mid Z_1= z \}$. 
	It can be shown that there exists a  continuous Gaussian random field,  
	denoted by $\{\chi(x),x\in \bar \dom\}$, who has the same distribution as $\{\xi(x),x\in \bar \dom \mid Z_1= z \}$ and belongs to $C^{\beta}(\bar \dom)$ almost surely.
	%\xl{Need to make sure that conditional Gaussian random field has the same regularity as the original one.}
	The mean and covariance function of $\chi(x)$ satisfy
	\begin{eqnarray*}
		\mu_{\chi}(x)&=& \Var(Z_1)^{-1}\Cov(Z_1,\xi(x)) z = \Var(Z_1)^{-1} \int_\dom \xi^*(y)C(x,y)dy,\\
		C_{\chi}(x,y)&=& C(x,y)-\Var(Z_1)^{-1}\Cov(\xi(x),Z_1)\Cov(\xi(y),Z_1)\\&=& C(x,y)-\Var(Z_1)^{-1} \int_\dom \xi^*(z)C(x,z)dz \int_\dom \xi^*(z)C(y,z)dz.
	\end{eqnarray*}
	According to the expression \eqref{eq:var-z1} and  $\sup_{y\in\bar \dom}|C(\cdot,y)|_{2\beta}\in <\infty$, we have that,
	\begin{equation}\label{eq:mean-order}
	|\mu_{\chi}|_{\beta}=O(\sigma^{1-\alpha}z) \mbox{ and } 	\sup_{y\in \bar \dom}|C_{\chi}(,y)|_{2\beta}<\infty.
	\end{equation}
	Let $\zeta(x)=\chi(x)-\mu_{\chi}(x)$ be a centered Gaussian random field. Then event $\{
	|\chi|_{\beta}^2>\frac{\Lambda(\xi^*)z}{\sigma}
	\}$ implies that $\{|\zeta|_{\beta}>(\frac{\Lambda(\xi^*)z}{\sigma})^{\frac{1}{2}}-|\mu_{\chi}|_{\beta}\}$. Furthermore, according to \eqref{eq:mean-order} and Lemma~\ref{lemma:approx-lam} on page~\pageref{lemma:approx-lam}, we have
	$$
	\{
	|\chi|_{\beta}^2>\frac{\Lambda(\xi^*)z}{\sigma}
	\}\subset\{
	|\zeta|_{\beta}>\varepsilon_0 \sigma^{\frac{\alpha}{2}-1}\sqrt{z}-O(\sigma^{1-\alpha}z)
	\}.
	$$
	Because $z\leq \kappa_0 \sigma^{2\alpha-2-\varepsilon}$, we have $\sigma^{\frac{\alpha}{2}-1}\sqrt{z}-O(\sigma^{1-\alpha}z) \geq \varepsilon_0 \sigma^{\frac{\alpha}{2}-1}\sqrt{z}$ for a possibly different $\varepsilon_0$. Therefore,
	$$
	\{
	|\chi|_{\beta}^2>\frac{\Lambda(\xi^*)z}{\sigma}
	\}\subset\{
	|\zeta|_{\beta}>\varepsilon_0 \sigma^{\frac{\alpha}{2}-1}\sqrt{z}
	\}.
	$$
	Consequently, we have
	\begin{equation}\label{eq:bound-chi}
	\PP(|\chi|_{\beta}^2>\frac{\Lambda(\xi^*)z}{\sigma})\leq \PP(|\zeta|_{\beta}>\varepsilon_0 \sigma^{\frac{\alpha}{2}-1}\sqrt{z}).
	\end{equation}
	According to the definition of the norm
	$
	|\zeta|_{\beta}=\sup_{x\in\bar \dom}|\zeta(x)|+[\zeta]_{\beta}.
	$
%	with
%	$$
%	[\zeta]_{\beta}=\sup_{x,y\in\bar \dom, x\neq y}\frac{|\zeta(x)-\zeta(y)|}{|x-y|^{\beta}}.
%	$$
%	Motivated by this definition, we define another centered Gaussian random field double indexed by $x,y\in\bar \dom$
%	\begin{equation}
%	g(x,y)=\left\{
%	\begin{array}{lcl}\frac{\zeta(x)-\zeta(y)}{|x-y|^{\beta}}&\mbox{ for }& x\neq y\\
%	0 &\mbox{ for }& x=y
%	\end{array}.
%	\right.
%	\end{equation}
	Therefore, an upper bound for  \eqref{eq:bound-chi} is
	\begin{equation}\label{eq:split}
	\PP(|\chi|_{\beta}^2>\frac{\Lambda(\xi^*)z}{\sigma})\leq \PP(\sup_{x\in \bar \dom} |\zeta(x)|\geq\frac{\varepsilon_0}{2}\sigma^{\frac{\alpha}{2}-1}\sqrt{z}) + \PP([\zeta]_{\beta}\geq\frac{\varepsilon_2}{2}\sigma^{\frac{\alpha}{2}-1}\sqrt{z}).
	\end{equation}
	We will present upper bounds for the first and second terms in the above display separately. We start with the first term. Because $\zeta$ is a centered and continuous Gaussian random field, with the aid of Lemma~\ref{lemma:borel}, we have that $\EE\sup_{x\in \bar \dom}|\zeta(x)|<\infty$ and
	\begin{equation*}
	\PP(\sup_{x\in\bar \dom}|\zeta(x)|>\frac{\varepsilon_0}{2}\sigma^{\frac{\alpha}{2}-1}\sqrt{z})\leq 2\exp\left\{-\frac{\varepsilon_0^2 \sigma^{\alpha-2}z}{32\sup_{x\in \bar \dom}\Cov_{\chi}(x,x)}\right\},
	\end{equation*}
	for $\sigma$ and $z$ such that $\varepsilon_0\sigma^{\frac{\alpha}{2}-1}\sqrt{z}>2\EE \sup_{x\in \bar \dom} |\zeta(x)|$. Because $z\geq \sigma^{\varepsilon}$, $\varepsilon_0\sigma^{\frac{\alpha}{2}-1}\sqrt{z}>2\EE \sup_{x\in \bar \dom} |\zeta(x)|$ is satisfied for $\sigma$ sufficiently small. Consequently, for $\sigma$ sufficiently small, we have
	\begin{equation}\label{eq:bound-sup}
	\PP(\sup_{x\in\bar \dom}|\zeta(x)|>\frac{\varepsilon_0}{2}\sigma^{\frac{\alpha}{2}-1}\sqrt{z})< e^{-\varepsilon_0 \sigma^{\alpha-2}z}
	\end{equation}
	for a sufficiently small and possibly different $\varepsilon_0$.
	We proceed to the second term on the right-hand-side of \eqref{eq:split}.
	Because $\zeta\in C^{\beta}(\bar \dom)$ almost surely, we obtain an upper bound for $\PP([\zeta]_{\beta}>\frac{\varepsilon_0}{2}\sigma^{\frac{\alpha}{2}-1}\sqrt{z})$ using similar arguments as those for \eqref{eq:bound-dg} on page \pageref{eq:bound-dg}
	\begin{equation}\label{eq:bound-zeta}
	\PP([\zeta]_{\beta}>\frac{\varepsilon_0}{2}\sigma^{\frac{\alpha}{2}-1}\sqrt{z})<2e^{-\varepsilon_0 \sigma^{\alpha-2}z},
	\end{equation}
	for $\sigma$ sufficiently small and a positive constant $\varepsilon_0$.
	Combine \eqref{eq:split}, \eqref{eq:bound-sup} and \eqref{eq:bound-zeta}, we have
	\begin{equation}\label{eq:bound-chi-beta}
		\PP(|\chi|_{\beta}^2>\frac{\Lambda(\xi^*)z}{\sigma})<2e^{-\varepsilon_0 \sigma^{\alpha-2}z}.
	\end{equation}
%	According to Lemma~\ref{lemma:borel}, we have that $\EE \sup_{x,y\in \bar \dom} g(x,y)<\infty$, and
%	\begin{equation}\label{eq:bound-beta}
%	\PP\Big(\sup_{x,y\in\bar \dom}|g(x,y)|>\frac{\varepsilon_0}{2}\sigma^{\frac{\alpha}{2}-1}\sqrt{z}\Big)\leq 2\exp\{-\frac{\varepsilon_0^2 \sigma^{\alpha-2}z}{32\sup_{x,y\in \bar \dom}\Var g(x,y)}\},
%	\end{equation}
%	for $\sigma$ and $z$ such that $\varepsilon_0\sigma^{\frac{\alpha}{2}-1}\sqrt{z}>2\EE \sup_{x,y\in \bar \dom} g(x,y)$, and this is satisfied for $\sigma$ sufficiently small. Furthermore, as $\sigma$ tends to zero,
%	$$
%	\Var g(x,y)=|x-y|^{-2\beta}(Cov_{\chi}(x,x)+Cov_{\chi}(y,y)-2Cov_{\chi}(x,y))\leq 2|C_{\chi}|_{2\beta}.
%	$$
%	Therefore, \eqref{eq:bound-beta} implies that
%	\begin{equation*}
%	\PP\Big(\sup_{x,y\in\bar \dom}|g(x,y)|>\frac{\varepsilon_0}{2}\sigma^{\frac{\alpha}{2}-1}\sqrt{z}\Big)<e^{-\varepsilon_0 \sigma^{\alpha-2}z}
%	\end{equation*}
%	for some possibly different $\varepsilon_0$.
Recall that $\chi$ has the same distribution as $\{
\zeta(x):x\in\bar \dom|Z_1=z
\}$, thus \eqref{eq:bound-chi-beta} implies
	\begin{equation}\label{eq:z1-z}
	\PP(|\xi|_{\beta}^2>\frac{\Lambda(\xi^*)z}{\sigma}|Z_1=z)< 2e^{-\varepsilon_0 \sigma^{\alpha-2}z}.
	\end{equation}
	Using similar arguments, we have that for $\sigma$ sufficiently small
	\begin{equation}\label{eq:bond-betaplus}
	\PP(|\xi|_{\beta}^2>\frac{\Lambda(\xi^*)z}{\sigma}|Z_1=-z)< 2e^{-\varepsilon_0 \sigma^{\alpha-2}z}.
	\end{equation}
	Combing the above inequality with \eqref{eq:z1-z}, we have
	$$
	\PP(|\xi|_{\beta}^2>\frac{\Lambda(\xi^*)z}{\sigma}|Z_1=z)+\PP(|\xi|_{\beta}^2>\frac{\Lambda(\xi^*)z}{\sigma}|Z_1=z)<4e^{-\varepsilon_0 \sigma^{\alpha-2}z}< e^{-\varepsilon_0'\sigma^{\alpha-2}z}
	$$
	for $\varepsilon_0'<\varepsilon_0$ and $\sigma$ sufficiently small.
	This completes our proof for the case where $k=0$ and $\beta>0$.
	For the case $k=0$ and $\beta=0$, $|\xi|_{\beta}=|\xi|_0$. With similar proof as those for \eqref{eq:bound-sup}, we have
	\begin{equation}\label{eq:bond-beta-0}
	\PP(|\xi|_0^2>\frac{\Lambda(\xi^*)z}{\sigma}|Z_1=z)\leq \PP(\sup_{x\in \bar \dom} |\zeta(x)|\geq\frac{\varepsilon_0}{2}\sigma^{\frac{\alpha}{2}-1}\sqrt{z}) <2e^{-\varepsilon_0\sigma^{\alpha-2}z}.
	\end{equation}
	We also have similar results conditional on $Z_1=-z$. Therefore, for $\beta=0$ we also have
	$$
	\PP(|\xi|_{\beta}^2>\frac{\Lambda(\xi^*)z}{\sigma}|Z_1=z)+\PP(|\xi|_{\beta}^2>\frac{\Lambda(\xi^*)z}{\sigma}|Z_1=z)<4e^{-\varepsilon_0 \sigma^{\alpha-2}z}.
	$$
	This completes our proof for the case that $k=0$.
	We now proceed to prove the lemma for $k\geq 1$.
	Assuming that for $k=m$,
	\begin{equation}
	\label{eq:induction-assumption}
	\PP(|\xi|_{k,\beta}^2>\frac{\Lambda(\xi^*)z}{\sigma}|Z_1=z)+\PP(|\xi|_{k,\beta}^2>\frac{\Lambda(\xi^*)z}{\sigma}|Z_1=z)<e^{-\varepsilon_0 \sigma^{\alpha-2}z}
	\end{equation}
	for some positive constant $\varepsilon_0$ that is independent with $\sigma$ and $z$ but possibly depend on $k$.
	We will prove that the following inequality holds for $\sigma$ sufficiently small and a positive constant $\varepsilon_0$,
	\begin{equation}\label{eq:induction-goal}
	\PP(|\xi|_{m+1,\beta}^2>\frac{\Lambda(\xi^*)z}{\sigma}|Z_1=z)+\PP(|\xi|_{m+1,\beta}^2>\frac{\Lambda(\xi^*)z}{\sigma}|Z_1=z)<e^{-\varepsilon_0' \sigma^{\alpha-2}z}.
	\end{equation}
	According to the definition of the norm $|\cdot|_{m+1,\beta}$, we know that for $\beta>0$ $$|\xi|_{m+1,\beta}=|\xi|_m+\sup_{|\gamma|=m+1}\sup_{x\in\bar \dom}|D^{\gamma}\xi(x)|+\sup_{|\gamma|=m+1}[D^{\gamma}\xi]_{\beta}.$$
	Therefore,
	\begin{eqnarray*}
		&&\Big\{
		|\xi|_{m+1,\beta}^2\geq \frac{\Lambda(\xi^*)z}{\sigma}
		\Big\}\\
		&\subset& \{
		|\xi|_{m}^{2}\geq\frac{\Lambda(\xi^*)z}{2\sigma}
		\}\bigcup\Big(\bigcup_{|\gamma|=m+1}\bigcup_{|\gamma'|=m+1}\{(\sup_{x\in\bar \dom}|D^{\gamma}\xi(x)|+[D^{\gamma'}\xi]_{\beta})^2\geq\frac{\Lambda(\xi^*)z}{2\sigma} \}\Big).
	\end{eqnarray*}
	Consequently, we arrive at an upper bound
	\begin{eqnarray}
	&&\PP\Big(
	|\xi|_{m+1,\beta}^2\geq \frac{\Lambda(\xi^*)z}{\sigma}
	|Z_1=z\Big)\notag\\
	&\leq&\PP(|\xi|_{m}^{2}\geq\frac{\Lambda(\xi^*)z}{2\sigma}|Z_1=z)\notag\\
	&&+\sum_{|\gamma|=m+1}\sum_{|\gamma'|=m+1}\PP\Big((\sup_{x\in\bar \dom}|D^{\gamma}\xi(x)|+[D^{\gamma'}\xi]_{\beta})^2\geq\frac{\Lambda(\xi^*)z}{2\sigma} |Z_1=z\Big),\label{eq:split-induction}
	\end{eqnarray}
	We present upper bounds for the first and second terms on the right-hand-side of the above display separately.
	For the first term, according to \eqref{eq:induction-assumption}, we have
	\begin{equation}\label{eq:split-induction-term1}
	\PP(|\xi|_{m}^{2}\geq\frac{\Lambda(\xi^*)z}{2\sigma}|Z_1=z)\leq e^{-\varepsilon_0 \sigma^{\alpha-2}z}.
	\end{equation}
	For the second term, notice that
	\begin{eqnarray}
	&&\{(\sup_{x\in\bar \dom}|D^{\gamma}\xi(x)|+[D^{\gamma'}\xi]_{\beta})^2\geq\frac{\Lambda(\xi^*)z}{2\sigma}\}\notag\\
	&=&\{\sup_{x\in\bar \dom}|D^{\gamma}\xi(x)|+[D^{\gamma'}\xi]_{\beta}\geq\sqrt{\frac{\Lambda(\xi^*)z}{2\sigma}}\}\notag\\
	&\subset&\{
	\sup_{x\in\bar \dom}|D^{\gamma}\xi(x)|\geq \frac{1}{2}\sqrt{\frac{\Lambda(\xi^*)z}{2\sigma}}
	\}\cup\{[D^{\gamma'}\xi]_{\beta}\geq\frac{1}{2}\sqrt{\frac{\Lambda(\xi^*)z}{2\sigma}}  \}.\notag
	\end{eqnarray}
	Therefore,
	\begin{eqnarray}
	&&\PP\Big((\sup_{x\in\bar \dom}|D^{\gamma}\xi(x)|+[D^{\gamma'}\xi]_{\beta})^2\geq\frac{\Lambda(\xi^*)z}{2\sigma}|Z_1=z\Big)\notag\\
	&\leq& \PP\Big(\sup_{x\in\bar \dom}|D^{\gamma}\xi(x)|\geq \frac{1}{2}\sqrt{\frac{\Lambda(\xi^*)z}{2\sigma}}|Z_1=z\Big)+\PP\Big([D^{\gamma'}\xi]_{\beta}\geq\frac{1}{2}\sqrt{\frac{\Lambda(\xi^*)z}{2\sigma}}|Z_1=z\Big).\label{eq:bound-mplus}
	\end{eqnarray}
	Now we present upper bounds for the two terms on the right-hand-side of the above inequality for $\gamma$ and $\gamma'$ such that $|\gamma|=m+1$ and $|\gamma'|=m+1$. To do so, we consider a continuous Gaussian random field
	$\chi_{1}$ that belongs to $C^{\beta}(\bar \dom)$ almost surely, and it has the same distribution as $\{
	D^{\gamma}\xi(x),x\in\bar \dom|Z_1=z
	\}$.
	\begin{lemma}\label{lemma:higher-order-covariance}
		Let $C_{\chi_1}(s,t)=\EE\chi_1(s)\chi_1(t)$ and $\mu_{\chi_1}(t)= \EE \chi_1(t) $, then we have
		$$
		|\mu_{\chi_1}|_{\beta}=O(\sigma^{1-\alpha}z)\mbox{ and } \sup_{y \in \bar \dom}|C_{\chi_1}(\cdot,y)|<\infty.
		$$
		The above expressions are uniform in $\gamma$ for $|\gamma|=m+1$.
	\end{lemma}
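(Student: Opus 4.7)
The plan is to apply the standard formulas for Gaussian conditioning and estimate each piece using the orders of $\sigma$ already established in the excerpt. Since $(D^{\gamma}\xi(x),Z_1)$ is a jointly centered Gaussian pair, the conditional distribution of $D^{\gamma}\xi$ given $Z_1=z$ is Gaussian with mean and covariance
\begin{equation*}
\mu_{\chi_1}(x) = \Var(Z_1)^{-1}\,\Cov(D^{\gamma}\xi(x),Z_1)\,z,
\quad
C_{\chi_1}(x,y) = C_{D^{\gamma}\xi}(x,y) - \Var(Z_1)^{-1}\,\Cov(D^{\gamma}\xi(x),Z_1)\,\Cov(D^{\gamma}\xi(y),Z_1).
\end{equation*}
My first step is to write the cross-covariance explicitly. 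Using $Z_1=\int_{\dom}\xi^*(y)\xi(y)\,dy$ and interchanging $D^{\gamma}_x$ with expectation and integration (justified by the Hölder regularity of $C$ in Assumption A3), I get
\begin{equation*}
\Cov(D^{\gamma}\xi(x),Z_1) = \int_{\dom} \xi^*(y)\, D^{\gamma}_x C(x,y)\, dy.
\end{equation*}

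Next I would bound its Hölder norm in $x$. Assumption A3 provides $\sup_{y\in\bar\dom}|D^{\gamma}_x C(\cdot,y)|_{\beta}$ bounded uniformly over $|\gamma|\leq k$, and Proposition~\ref{prop:opt}(ii) gives $|\xi^*|_0 = O(\sigma^{\alpha-1})$. A standard estimate on integral operators with a Hölder kernel then yields
\begin{equation*}
|\Cov(D^{\gamma}\xi(\cdot),Z_1)|_{\beta} \leq \operatorname{meas}(\dom)\,|\xi^*|_0\,\sup_{y\in\bar\dom}|D^{\gamma}_x C(\cdot,y)|_{\beta} = O(\sigma^{\alpha-1}),
\end{equation*}
uniformly for $|\gamma|=m+1$ (since there are only finitely many such multi-indices, and A3 bounds each of them). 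Combining with $\Var(Z_1)^{-1}=O(\sigma^{2-2\alpha})$ from~\eqref{eq:var-z1} delivers $|\mu_{\chi_1}|_{\beta}=O(\sigma^{1-\alpha}z)$, which is the first claim.

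For the covariance claim, the first term $C_{D^{\gamma}\xi}(\cdot,y)$ is bounded in the $\beta$-norm uniformly in $y$ directly by Assumption A3. For the subtracted term, the product $\Cov(D^{\gamma}\xi(x),Z_1)\Cov(D^{\gamma}\xi(y),Z_1)$ has $\beta$-norm in $x$ (uniform in $y$) of order $\sigma^{\alpha-1}\cdot\sigma^{\alpha-1}=\sigma^{2\alpha-2}$, and multiplying by $\Var(Z_1)^{-1}=O(\sigma^{2-2\alpha})$ produces an $O(1)$ contribution. Hence $\sup_{y\in\bar\dom}|C_{\chi_1}(\cdot,y)|_{\beta}<\infty$, uniformly in $\gamma$ with $|\gamma|=m+1$.

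There is no real obstacle here beyond bookkeeping: the only nontrivial point is justifying the interchange of $D^{\gamma}_x$ with the expectation/integration defining $\Cov(D^{\gamma}\xi(x),Z_1)$, which is immediate from the Hölder regularity of $C$ required in A3. The uniformity in $\gamma$ is built in because A3 provides uniform bounds over all multi-indices of order at most $k$, and the induction on $k$ in the proof of Lemma~\ref{lemtail} keeps $|\gamma|=m+1$ within that range.
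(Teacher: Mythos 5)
Your argument is correct and follows essentially the same route as the paper: write the Gaussian conditional mean and covariance, express the cross-covariance $\Cov(D^\gamma\xi(x),Z_1)$ as an integral of $D^\gamma_x C(x,\cdot)$ against $\xi^*$, bound its H\"older norm by A3 and $|\xi^*|_0=O(\sigma^{\alpha-1})$, and scale by $\Var(Z_1)^{-1}=O(\sigma^{2-2\alpha})$. The one structural difference is that the paper isolates the identity $\Cov(D^\gamma\xi(x),\xi(y))=D^\gamma_x C(x,y)$ and its H\"older bound into a separate inductive lemma (Lemma~\ref{lemma:cov}), precisely to justify the interchange of differentiation with expectation, whereas you dispatch that point in one parenthetical; that interchange is the only genuinely nontrivial step and deserves the explicit argument the paper gives it. A smaller imprecision: you work with the $\beta$-H\"older seminorm throughout, but A3 gives the bound $\sup_{y}|C(\cdot,y)|_{k,2\beta}<\infty$ (and $\sup_y|C_{D^\gamma\xi}(\cdot,y)|_{2\beta}<\infty$), and the paper's conclusion is $\sup_y|C_{\chi_1}(\cdot,y)|_{2\beta}<\infty$, which is what the subsequent Borell--TIS estimate on increments actually requires; you should carry the exponent $2\beta$ rather than $\beta$ through the covariance estimate.
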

	Notice that the above lemma has the same form as \eqref{eq:mean-order}, so with similar arguments as those for  \eqref{eq:bound-sup}, we have
	\begin{equation}\label{eq:bound-chi1-sup}
	\PP\Big(\sup_{x\in\bar \dom}|D^{\gamma}\chi_1(x)|\geq \frac{1}{2}\sqrt{\frac{\Lambda(\xi^*)z}{2\sigma}}\Big)\leq e^{-\varepsilon_0 \sigma^{\alpha-2}z}.
	\end{equation}
	Also, similar as arguments before \eqref{eq:bound-zeta}, we have
	\begin{equation}\label{eq:bound-chi1-beta}
	\PP\Big([D^{\gamma'}\chi_1]_{\beta}\geq \frac{1}{2}\sqrt{\frac{\Lambda(\xi^*)z}{2\sigma}}\Big)\leq e^{-\varepsilon_0 \sigma^{\alpha-2}z}.
	\end{equation}
	Combining \eqref{eq:bound-chi1-sup} and \eqref{eq:bound-chi1-beta} and \eqref{eq:bound-mplus}, we have
	$$
	\PP\Big((\sup_{x\in\bar \dom}|D^{\gamma}\xi(x)|+[D^{\gamma'}\xi])^2\geq\frac{\Lambda(\xi^*)z}{2\sigma}|Z_1=z\Big)\leq 2e^{-\varepsilon_0 \sigma^{\alpha-2}z}.
	$$
	Combining the above display with \eqref{eq:split-induction} and \eqref{eq:split-induction-term1}, we have
	\begin{equation*}
	\PP\Big(
	|\xi|_{m+1,\beta}^2\geq \frac{\Lambda(\xi^*)z}{\sigma}
	|Z_1=z\Big)\leq e^{-\varepsilon_0\sigma^{\alpha-2}z},
	\end{equation*}
	for $\sigma$ sufficiently small and a possibly different constant $\varepsilon_0$.
	Similarly, conditional on $Z_1=-z$, we have
	\begin{equation*}
		\PP\Big(
		|\xi|_{m+1,\beta}^2\geq \frac{\Lambda(\xi^*)z}{\sigma}
		|Z_1=-z\Big)\leq e^{-\varepsilon_0\sigma^{\alpha-2}z}.
	\end{equation*}
	Thus,
	$$
		\PP\Big(
		|\xi|_{m+1,\beta}^2\geq \frac{\Lambda(\xi^*)z}{\sigma}
		|Z_1=z\Big)+	\PP\Big(
		|\xi|_{m+1,\beta}^2\geq \frac{\Lambda(\xi^*)z}{\sigma}
		|Z_1=-z\Big)\leq2 e^{-\varepsilon_0\sigma^{\alpha-2}z},
	$$
	and
	we complete the proof for \eqref{eq:induction-goal} for the case where $\beta>0$. For $\beta=0$, $|\xi|_{m+1}=|\xi|_m+\sup_{|\gamma|=m+1}\sup_{x\in\bar \dom}|D^{\gamma}\xi(x)|$.
	We obtain the proof for the case where $\beta=0$ by ignoring all the $[D^{\gamma'}\xi]_{\beta}$ terms in the proof for the case where $\beta>0$. This completes the induction.
\end{proof}

\begin{proof}[Proof of Lemma~\ref{lemma:regularity}]
%	This lemma is essentially a special case of Theorem 3.7, Theorem 6.6 and Theorem 6.14 in \cite{gilbarg2015elliptic}.
	According to Theorem 6.14 in \cite{gilbarg2015elliptic}, we have that the PDE \eqref{eq:gPDE} has a unique solution in $C^{2,\beta}(\bar \dom)$. Denote this solution by $u_{a,f}$, then according to Theorem 6.6 in \cite{gilbarg2015elliptic}, we have the upper bound
	\begin{equation*}
	|u_{a,f}|_{2,\beta}\leq \kappa(\delta,M,d,\dom)(|u_{a,f}|_0+|f|_0).
	\end{equation*}
	We conclude the proof with the following  upper bound provided by Theorem 3.7 in\\ \cite{gilbarg2015elliptic},
	$$
	|u_{a,f}|_0\leq \kappa_0 |f|_0
	$$
	for a constant $\kappa_0$ depending only on the domain $\dom$ and $|a|_{1}$.
\end{proof}

\begin{proof}[Proof of Lemma~\ref{lemma:stability}]
	According to the definition of $u_{a_1,f_1}$ and $u_{a_2,f_2}$, we have that
	$$
	-\nabla\cdot(a_1(x)\nabla u_{a_1,f_1}(x))=f_1\mbox{ and }	-\nabla\cdot(a_2(x)\nabla u_{a_2,f_2}(x))=f_2.
	$$
	Taking difference between the above two equalities, we have
	\begin{equation*}
	-\nabla\cdot(a_1\nabla u_{a_1,f_1})+\nabla\cdot(a_2(x)\nabla u_{a_2,f_2})=f_1(x)-f_2(x) \mbox{ for } x\in \dom.
	\end{equation*}
	Rearranging terms in the above expression, we have
	\begin{equation*}
	-\nabla\cdot\Big(a_2(x)\nabla(u_{a_2,f_2}(x)-u_{a_1,f_1}(x))\Big)=f_2(x)-f_1(x)-\nabla\cdot\{(a_1(x)-a_2(x))\nabla u_{a_1,f_1}(x) \}.
	\end{equation*}
	Therefore, $\bar{u}=u_{a_2,f_2}-u_{a_1,f_1}\in C^{2,\beta}(\bar \dom)$ is a solution to the elliptic PDE
	\begin{equation*}
	\begin{cases}
	- \nabla \cdot (a_2(x) \nabla \bar u (x)) = \bar{f}(x) & \text{for  } x\in\dom;  \\
	\bar u (x)= 0 & \text{for } x\in\partial \dom,
	\end{cases}
	\end{equation*}
	where $\bar f(x)=f_2(x)-f_1(x)-\nabla\cdot\{(a_1(x)-a_2(x))\nabla u_{a_1,f_1}(x) \}$.
	According to Lemma~\ref{lemma:regularity}, we have
	\begin{equation}\label{eq:bound3}
	|u_{a_2,f_2}-u_{a_1,f_1}|_{2,\beta}\leq \kappa(\delta,M,d,\dom) |\bar f|_{\beta}.
	\end{equation}	
	We further establish an upper bound for $|\bar f|_{\beta}$,
	\begin{eqnarray}\label{eq:bound4}
	|\bar f|_{\beta}\leq |f_2-f_1|_{\beta}+ |a_2-a_1|_{1,\beta}|u_{a_1,f_1}|_{2,\beta}.
	\end{eqnarray}
	According to Lemma~\ref{lemma:regularity},
	$$
	|u_{a_1,f_1}|_{2,\beta}\leq \kappa(\delta,M,d,\dom) |f_1|_{\beta}.
	$$
	Combining this with \eqref{eq:bound3} and \eqref{eq:bound4}, we have
	\begin{equation*}
	|u_{a_2,f_2}-u_{a_1,f_1}|_{2,\beta}\leq \kappa(\delta,M,d,\dom) \{|f_2-f_1|_{\beta}+\kappa(\delta,M,d,\dom)|a_2-a_1|_{1,\beta}|f_1|_{\beta}\}.
	\end{equation*}
	We complete the proof by setting $\tilde{\kappa}(\delta,M,d,\dom)=\max(\kappa(\delta,M,d,\dom),\kappa(\delta,M,d,\dom)^2)$.
\end{proof}

		\begin{proof}[Proof of Lemma~\ref{lemma:t-contract-w}]
		We take difference between $T_{w_1}(\lambda)$ and $T_{w_2}(\lambda)$,
		\begin{equation*}
		T_{w_1}(\lambda)-T_{w_2}(\lambda) = -\K(\g'[\mathbf{0}])^{-1}\sigma^{-1}\{\g(\sigma\C\lambda\g'[\sigma \C w_1])-\g(\sigma\C \lambda\g'[\sigma\C w_2])\}
		\end{equation*}
		Therefore,
		\begin{equation}\label{eq:bound5}
		|T_{w_1}(\lambda)-T_{w_2}(\lambda)|=O(\sigma^{-1}\{|\g(\sigma\C \lambda\g'[\sigma \C w_1])-\g(\sigma\C \lambda\g'[\sigma\C w_2])|\}).
		\end{equation}
		According to Lemma~\ref{lemma:second-expansion}, we have
		\begin{eqnarray*}
			&&\g(\sigma\lambda\C\g'[\sigma \C w_1])-\g(\sigma\lambda\C\g'[\sigma\C w_2])\\
			&=&
			\sigma\lambda\int_{\dom}\g'[\sigma\lambda\C\g'[\sigma\C  w_2]](x)\C\{\g'[\sigma \C w_1](x)-\g'[\sigma \C w_2](x)\}dx\\
			&& + O(\sigma^2\lambda^2|\g'[\sigma \C w_1]-\g'[\sigma \C w_2]|_{k,\beta}^2).
		\end{eqnarray*}
		According to \eqref{eq:gp-ex} and Assumption A1, the above display can be further simplified as
		$$
		\g(\sigma\lambda\C\g'[\sigma w_1])-\g(\sigma\lambda\C\g'[\sigma w_2])= O(\sigma\lambda |\g'[\sigma w_1]-\g'[\sigma w_2]|_{k,\beta}),
		$$
		which is further simplified as
		$$
		\g(\sigma\lambda\C\g'[\sigma w_1])-\g(\sigma\lambda\C\g'[\sigma w_2])=O(\sigma\lambda \sigma|w_1-w_2|_{k,\beta}).
		$$
		The above expression and \eqref{eq:bound5} give
		\begin{equation*}
		|T_{w_1}(\lambda)-T_{w_2}(\lambda)|=O(\sigma\lambda|w_1-w_2|_{k,\beta}) = O(\sigma^{\alpha})|w_1-w_2|_{k,\beta}.
		\end{equation*}
		The last inequality in the above expression is due to $\lambda=O(\sigma^{\alpha-1})$.
\end{proof}

\begin{proof}[Proof of Lemma~\ref{lemma:higher-order-covariance}]
We need the next lemma for the current proof.
\begin{lemma}\label{lemma:cov}
We define the covariance  function $$ C_{D^{\gamma}\xi,\xi}(x,y) = \Cov(D^{\gamma}\xi(x),\xi(y)).
	$$
	Then  $\sup_{y\in\bar \dom}|C_{D^{\gamma}\xi,\xi}(\cdot,y)|_{2\beta}<\infty$ for all $|\gamma|\leq k$ under Assumption A3.
\end{lemma}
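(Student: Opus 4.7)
The plan is to reduce the lemma to a smoothness estimate on the covariance kernel itself via the identity
\begin{equation}\label{eq:plan-identity}
C_{D^{\gamma}\xi,\xi}(x,y)=D_x^{\gamma} C(x,y), \qquad |\gamma|\leq k,
\end{equation}
and then to bound $|D_x^{\gamma} C(\cdot,y)|_{2\beta}$ uniformly in $y$ using the hypothesis $\sup_{y\in\bar\dom}|C(\cdot,y)|_{k,2\beta}<\infty$ from Assumption~A3.

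To establish \eqref{eq:plan-identity}, I would differentiate under the expectation. Assumption~A3 gives $\xi\in C^{k,\beta}(\bar\dom)$ a.s., so the sample-path partial derivatives $D^{\gamma}\xi(x)$ exist for $|\gamma|\leq k$, and Borell--TIS (Lemma~\ref{lemma:borel}) applied to $\xi$ and to each $D^{\gamma}\xi$ (Gaussian and a.s.\ bounded on the compact set $\bar\dom$) furnishes finite moments of all orders for $|\xi|_{k,\beta}$. For the base case $\gamma=e_i$, the incremental quotient $h^{-1}(\xi(x+he_i)-\xi(x))\,\xi(y)$ converges a.s.\ to $(D^{e_i}\xi(x))\,\xi(y)$ and is dominated, uniformly in small $h$, by $|\xi|_{C^1}\,|\xi|_0\in L^1$. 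Dominated convergence then yields
$$\E[D^{e_i}\xi(x)\,\xi(y)]=\lim_{h\to 0}h^{-1}\bigl(C(x+he_i,y)-C(x,y)\bigr)=\partial_{x_i}C(x,y),$$
where the last equality uses that $C(\cdot,y)\in C^1$ by the hypothesis. Iterating this differentiation $|\gamma|$ times proves \eqref{eq:plan-identity}.

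With \eqref{eq:plan-identity} in hand, the remaining estimates are bookkeeping against the definition of the $C^{k,2\beta}$ norm. For $|\gamma|=k$, the seminorm $[D_x^{\gamma}C(\cdot,y)]_{2\beta}$ is bounded directly by $|C(\cdot,y)|_{k,2\beta}$. For $|\gamma|<k$, $D_x^{\gamma}C(\cdot,y)\in C^1(\bar\dom)$ with gradient controlled by $|C(\cdot,y)|_{k,2\beta}$; since $\bar\dom$ is bounded, a Lipschitz function on $\bar\dom$ is automatically $2\beta$-H\"older (assuming $2\beta<1$, which is implicit in the setup) with seminorm at most $\mathrm{diam}(\bar\dom)^{1-2\beta}$ times the Lipschitz constant. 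Combined with the obvious sup-norm bound this yields $|D_x^{\gamma}C(\cdot,y)|_{2\beta}\leq \kappa_{\dom,\beta}\,|C(\cdot,y)|_{k,2\beta}$ for a constant depending only on $\dom$ and $\beta$, and taking $\sup_y$ completes the proof. The only genuine subtlety is the commutation of derivative and expectation in the first step; this is handled by dominated convergence once the finite moments of $|\xi|_{k,\beta}$ produced by Borell--TIS are in place.
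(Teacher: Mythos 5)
Your proof is correct and follows essentially the same route as the paper: both reduce the claim to the identity $C_{D^\gamma\xi,\xi}(x,y)=D_x^\gamma C(x,y)$ (the paper builds it iteratively by induction on $|\gamma|$, you iterate the one-derivative step directly) and then read the $C^{0,2\beta}$ bound off Assumption~A3's control of $|C(\cdot,y)|_{k,2\beta}$, using boundedness of $\bar\dom$ to pass from Lipschitz to $2\beta$-H\"older when $|\gamma|<k$. You are in fact a bit more careful than the paper in justifying the interchange of derivative and expectation via dominated convergence backed by Borell--TIS moment bounds, a step the paper takes tacitly.
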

Now we compute the mean and covariance of $\chi_1$.
$$
\mu_{\chi_1}(x)=\E[D^{\gamma}\xi(x)|Z_1=z] = \Var(Z_1)^{-1}\Cov(D^{\gamma}\xi(x), Z_1)z = \Var(Z_1)^{-1}\int_{\dom}  C_{D^{\gamma}\xi,\xi}(x,y)\xi^*(y)dtz,
$$
and
\begin{eqnarray*}
C_{\chi_1}(x,y)
 &=& C_{D^{\gamma}\xi}(x,y)-\Var(Z_1)^{-1}\Cov(D^{\gamma}\xi(x),Z_1)\Cov(D^{\gamma}\xi(y),Z_1)\\ &=&  C_{D^{\gamma}\xi}(x,y)- \frac{\int_{\dom}  C_{D^{\gamma}\xi,\xi}(x,r)\xi^*(y)dr \int_{\dom}  C_{D^{\gamma}\xi,\xi}(y,r)\xi^*(y)dr}{\Var(Z_1)}.
\end{eqnarray*}
Recall that $\Var(Z_1)\geq \varepsilon_0\sigma^{2\alpha-2}$ for some positive constant $\varepsilon$, and $|\xi^*|_{k,\beta}=O(\sigma^{\alpha-1})$. With the aid of Lemma~\ref{lemma:cov}, we simplify the mean and covariance of $\chi_1$.
\begin{equation*}
|\mu_{\chi_1}|_{\beta}= O(\sigma^{2-2\alpha}|\xi^*|_{0} \sup_{y}|C_{D^{\gamma}\xi,\xi}(\cdot,y)|_{2\beta} z)=O(\sigma^{1-\alpha}z),
\end{equation*}
and
\begin{equation*}
\sup_{y\in \bar \dom}|C_{\chi_1}(\cdot,y)|_{2\beta}=O(\sup_{y\in \bar \dom}|C_{D^{\gamma}\xi}(\cdot,y)|_{2\beta}+\sigma^{2-2\alpha} |\xi^*|_{0}^2 \sup_{y\in\bar \dom}|C_{D^{\gamma}\xi,\xi}(\cdot,y)|_{2\beta}^2)= O(1).
\end{equation*}
\end{proof}

\begin{proof}[Proof of Lemma~\ref{lemma:cov}]
	We will use induction to prove that for all $l=0,1,...,k$, $|\gamma|=l$,
	\begin{equation}\label{eq:induction-want}
 \sup_{y\in\bar \dom}|C_{D^{\gamma}\xi,\xi}(\cdot,y)|_{2\beta}<\infty.
	\end{equation}
	To start with, for $l=0$ and $|\gamma|=l$, \eqref{eq:induction-want} holds because of Assumption A3 and
	$$
 C_{D^{\gamma}\xi,\xi}(s,t) = C(s,t).
	$$
	Suppose that for all $|\gamma'|=l$,
	\begin{equation}\label{eq:induction-suppose}
 \sup_{y\in\bar \dom}|C_{D^{\gamma'}\xi,\xi}(\cdot,y)|_{k-l,2\beta}<\infty.
	\end{equation}
	For $|\gamma|=l+1$, we want to show that
	\begin{equation}\label{eq:induction-goal-Dg}
 \sup_{y\in\bar \dom}|C_{D^{\gamma}\xi,\xi}(\cdot,y)|_{k-l-1,2\beta}<\infty.
	\end{equation}
	Without loss of generality, we assume that $\gamma=(\gamma_1,...,\gamma_d)$ and $\gamma_1\geq 1$. Let $e_1=(1,...,0)$ be a $d$-dimensional basis vector, and $\gamma'=\gamma-e_1$, then $|\gamma'|=l$.
	We compute $C_{D^{\gamma}\xi,\xi}$.
	\begin{eqnarray*}
		C_{D^{\gamma}\xi,\xi}(x,y) &=&  \lim_{\varepsilon_1\to 0} \Cov(\frac{D^{\gamma'}\xi(x+\varepsilon_1 e_1)-D^{\gamma'}\xi(x)}{\varepsilon_1}, \xi(y))\\
		&=&\lim_{\varepsilon_1\to0}\varepsilon_1^{-1}\{C_{D^{\gamma'}\xi,\xi}(x+\varepsilon_1 e_1,y)-C_{D^{\gamma'}\xi,\xi}(x,y)\}\\
		&=& \frac{\partial}{\partial x_1} C_{D^{\gamma'}\xi} (x,y).
	\end{eqnarray*}
	Consequently,
	\begin{equation*}
	|C_{D^{\gamma}\xi,\xi}(\cdot,y)|_{k-l-1,2\beta}=|\frac{\partial}{\partial x_1} C_{D^{\gamma'}\xi} (\cdot,y)|_{k-l-1,2\beta}\leq |C_{D^{\gamma'}\xi} (\cdot,y)|_{k-l,2\beta}.
	\end{equation*}
	Thus,
	$$
	\sup_{y\in\bar \dom}	|C_{D^{\gamma}\xi,\xi}(\cdot,y)|_{k-l-1,2\beta}\leq \sup_{y\in\bar \dom}|C_{D^{\gamma'}\xi} (\cdot,y)|_{k-l,2\beta}<\infty.
	$$
	The second inequality of the above display is due to \eqref{eq:induction-suppose}.
The lemma is proved by induction.
\end{proof}

\end{document}